\newtheorem{theorem}{Theorem}
\newtheorem{lemma}{Lemma}
\newtheorem{corollary}{Corollary}
\numberwithin{equation}{subsection}
\begin{document}
\author{N. Memić, I. Simon and G. Tephnadze}
\title[Strong convergence ]{Strong convergence of two--dimensional
Vilenkin-Fourier series}
\address{N. Memić, Department of Mathematics, University of Sarajevo, Zmaja
od Bosne 33-35, Sarajevo, Bosnia and Herzegovina.}
\email{nacima.o@gmail.com}
\address{I. Simon, Institute of Mathematics and Informatics, University of P%
\.{e}cs H-7624 P\.{e}cs, Ifjúság u. 6. Hungary.}
\email{simoni@gamma.ttk.pte.hu}
\address{G. Tephnadze, Department of Mathematics, Faculty of Exact and
Natural Sciences, Tbilisi State University, Chavchavadze str. 1, Tbilisi
0128, Georgia and Department of Engineering Sciences and Mathematics, Lule%
\aa {} University of Technology, SE-971 87, Lule\aa {}, Sweden.}
\email{giorgitephnadze@gmail.com}
\date{}
\maketitle

\begin{abstract}
We prove that certain means of the quadratical partial sums of the
two-dimensional Vilenkin-Fourier series are uniformly bounded operators from
the Hardy space $H_{p}$ to the space $L_{p}$ for $0<p\leq 1.$ We also prove
that the sequence in the denominator cannot be improved.
\end{abstract}

\textbf{2010 Mathematics Subject Classification.} 42C10.

\textbf{Key words and phrases:} Vilenkin systems, Strong convergence,
martingale Hardy space.

\section{INTRODUCTION}

The definitions and notations used in this introduction can be found in our
next Section. It is known \cite[p. 125]{G-E-S} that Vilenkin systems are not
Schauder bases in $L_{1}\left( G_{m}\right) $. Moreover, (see \cite{AVD} and
\cite{S-W-S}) there exists a function in the Hardy space $H_{1}\left(
G_{m}\right) $, the partial sums of which are not bounded in $L_{1}\left(
G_{m}\right) .$ However, in Gát \cite{gat1} (see also \cite{b}) the
following strong convergence result was obtained for all $f\in H_{1}:$%
\begin{equation*}
\underset{n\rightarrow \infty }{\lim }\frac{1}{\log n}\overset{n}{\underset{%
k=1}{\sum }}\frac{\left\Vert S_{k}f-f\right\Vert _{1}}{k}=0,
\end{equation*}%
where Vilenkin system is bounded and $S_{k}f$ denotes the $k$-th partial sum
of the Vilenkin-Fourier series of $f$ \ (For the trigonometric analogue see
Smith \cite{sm}, for Walsh system see Simon \cite{Si}).

Simon \cite{si1} (see also \cite{tep4}) proved that\textbf{\ }there is an
absolute constant $c_{p},$ depending only on $p,$ such that
\begin{equation}
\overset{\infty }{\underset{k=1}{\sum }}\frac{\left\Vert S_{k}f\right\Vert
_{p}^{p}}{k^{2-p}}\leq c_{p}\left\Vert f\right\Vert _{H_{p}}^{p},
\label{1cc}
\end{equation}%
for all $f\in H_{p}\left( G_{m}\right) ,$ where $0<p<1.$

In \cite{tep5} it was proved that the sequence $\left\{ 1/k^{2-p}:k\in
\mathbb{N}\right\} $ in inequality (\ref{1cc}) can not be improved.

For the two-dimensional Walsh-Fourier series Goginava and Gogoladze \cite{gg}
for $p=1$ and Tephnadze \cite{tep1} for $0<p<1$ proved that there exists an
absolute constant $c_{p}$, depending only on $p,$ such that
\begin{equation}
\sum\limits_{n=1}^{\infty }\frac{\left\Vert S_{n,n}f\right\Vert _{p}}{%
n^{3-2p}\log ^{2\left[ p\right] }\left( n+1\right) }\leq c_{p}\left\Vert
f\right\Vert _{H_{p}^{\square }},  \label{2cc}
\end{equation}%
for all $f\in H_{p}^{\square }\left( G_{m}^{2}\right) $, where $\left[ p%
\right] $ denotes integer part of $p.$

Moreover, in \cite{tep1} and \cite{tep2} there were proved that sequence $%
\left\{ 1/k^{3-2p}\log ^{2\left[ p\right] }\left( k+1\right) :k\in \mathbb{N}%
\right\} $ in inequality (\ref{2cc}) is important.

Convergence of quadratical partial sums of two-dimensional Walsh-Fourier
series was investigated in detail by Gát, Goginava, Nagy \cite{GGN},
Goginava, Tkebuchava \cite{GGT}, Goginava \cite{go}, \ Gogoladze \cite{Go},
Weisz \cite{Webook2}. Some strong convergence theorem for the
two-dimensional case can be found in \cite{B-T, B-T-T}, \cite{gog8, gog9,
gg2}, \cite{tep3,nt2,nt3} and \cite{We}.

The main aim of this paper is to generalize inequality (\ref{2cc}) (see
Theorem \ref{theorem1}) for bounded Vilenkin system. We also prove that the
sequence $\left\{ 1/k^{3-2p}\log ^{2\left[ p\right] }\left( k+1\right) :k\in
\mathbb{N}\right\} $ in inequality (\ref{2cc}) can not be improved (see
Theorem \ref{theorem2}). We note that the counterexample which shows
sharpness of previous results is new even for Walsh case.

This paper is organized as follows: in order not to disturb our discussions
later on some definitions and notations are presented in Section 2. The main
results and some of its consequences can be found in Section 3. For the
proofs of the main results we need some auxiliary results of independent
interest. These results are presented in Section 4. The detailed proofs
are given in Section 5.

\section{DEFINITIONS AND NOTATION}

Denote by $\mathbb{N}_{+}$ the set of positive integers, $\mathbb{N}:=%
\mathbb{N}_{+}\cup \{0\}.$ Let $m:=(m_{0,}$ $m_{1},...)$ be a sequence of
positive integers not less than 2. Denote by
\begin{equation*}
Z_{m_{k}}:=\{0,1,\ldots ,m_{k}-1\}
\end{equation*}%
the additive group of integers modulo $m_{k}$.

Define the group $G_{m}$ as the complete direct product of the groups $%
Z_{m_{i}}$ with the product of the discrete topologies of $Z_{m_{j}}`$s.

\textbf{In this paper we discuss bounded Vilenkin groups,\ i.e. the case when%
}
\begin{equation*}
\sup_{n}m_{n}<\infty .
\end{equation*}

The direct product $\mu $ of the measures
\begin{equation*}
\mu _{k}\left( \{j\}\right) :=1/m_{k},\ (j\in Z_{m_{k}})
\end{equation*}%
is the Haar measure on $G_{m_{\text{ }}}$ with $\mu \left( G_{m}\right) =1.$

The elements of $G_{m}$ are represented by sequences
\begin{equation*}
x:=\left( x_{0},x_{1},\ldots ,x_{j},\ldots \right) ,\ \left( x_{j}\in
Z_{m_{j}}\right) .
\end{equation*}

It is easy to give a base of neighbourhoods of $G_{m}:$

\begin{equation*}
\text{ }I_{0}\left( x\right) :=G_{m},\text{ \ }I_{n}(x):=\{y\in G_{m}\mid
y_{0}=x_{0},\ldots ,y_{n-1}=x_{n-1}\}\,\,\left( x\in G_{m},\text{ }n\in
\mathbb{N}\right) .
\end{equation*}

Denote $I_{n}:=I_{n}\left( 0\right) ,$ for $n\in \mathbb{N}_{+}$ and
\begin{equation*}
e_{n}:=\left( 0,\ldots ,0,x_{n}=1,0,\ldots \right) \in G_{m},\text{ \ }%
\left( n\in \mathbb{N}\right) .
\end{equation*}

It is evident that
\begin{equation}
\overline{I_{N}}=\overset{N-1}{\underset{s=0}{\bigcup }}I_{s}\backslash
I_{s+1}.  \label{add}
\end{equation}

\bigskip If we define the so-called generalized number system based on $m$
in the following way :
\begin{equation*}
M_{0}:=1,\ M_{k+1}:=m_{k}M_{k}\,\,\,\ \ (k\in \mathbb{N}),
\end{equation*}%
then every $n\in \mathbb{N}$ can be uniquely expressed as
\begin{equation*}
n=\sum_{j=0}^{\infty }n_{j}M_{j},\text{ \ where \ \ }n_{j}\in Z_{m_{j}}\text{
\ }(j\in \mathbb{N})
\end{equation*}%
and only a finite number of $n_{j}`$s differ from zero. Let $|n|$ denote the
largest integer $j$ for which $n_{j}\neq 0$.

Denote by $\mathbb{N}_{n_{0}}$ the subset of positive integers $\mathbb{N}%
_{+},$ for which $n_{0}=1.$ Then for every $n\in \mathbb{N}_{n_{0}},$ $%
M_{k}<n<$ $M_{k+1}$ can be written as
\begin{equation*}
n=1+\sum_{j=1}^{k}n_{j}M_{j},
\end{equation*}%
where%
\begin{equation*}
n_{j}\in \left\{ 0,\ldots {},m_{j}-1\right\} ,~(j=1,...,k-1)\text{ and \ }%
n_{k}\in \left\{ 1,\ldots {},m_{k}-1\right\} .
\end{equation*}

By simple calculation we get that
\begin{equation}
\underset{\left\{ n:M_{k}\leq n\leq M_{k+1},\text{ }n\in \mathbb{N}%
_{n_{0}}\right\} }{\sum }1=\frac{M_{k+1}-M_{k}}{m_{0}}\geq cM_{k},
\label{1a}
\end{equation}%
where $c$ is an absolute constant.

For any natural number $n=\sum_{j=1}^{\infty }n_{j}M_{j},$ we define the
functions $v\left( n\right) $ and $v^{\ast }\left( n\right) $ by

\begin{equation}
v\left( n\right) =\sum_{j=1}^{\infty }\left\vert \delta _{j+1}-\delta
_{j}\right\vert +\delta _{0},\text{ \ }v^{\ast }\left( n\right)
=\sum_{j=1}^{\infty }\delta _{j}^{\ast },  \label{100}
\end{equation}%
where
\begin{equation*}
\delta _{j}=signn_{j}=sign\left( \ominus n_{j}\right) ,\text{ \ \ \ \ }%
\delta _{j}^{\ast }=\left\vert \ominus n_{j}-1\right\vert \delta _{j},
\end{equation*}%
and $\ominus $\ is the inverse operation for
\begin{equation*}
a_{k}\oplus b_{k}:=(a_{k}+b_{k})\text{mod}m_{k}.
\end{equation*}

The norm (or quasi-norm) of the Lebesgue spaces $L_{p}(G_{m})$ $\left(
0<p<\infty \right) $ are defined in the usual way.

Next, we introduce on $G_{m}$ an orthonormal system which is called the
Vilenkin system.

At first, we define the complex-valued function $r_{k}\left( x\right)
:G_{m}\rightarrow \mathbb{C},$ the generalized Rademacher functions, by
\begin{equation*}
r_{k}\left( x\right) :=\exp \left( 2\pi ix_{k}/m_{k}\right) ,\text{ }\left(
i^{2}=-1,x\in G_{m},\text{ }k\in \mathbb{N}\right) .
\end{equation*}

Now, define the Vilenkin system $\,\,\,\psi :=(\psi _{n}:n\in\mathbb{N})$ on
$G_{m}$ as:
\begin{equation*}
\psi _{n}(x):=\prod\limits_{k=0}^{\infty }r_{k}^{n_{k}}\left( x\right)
,\,\,\ \ \,\left( n\in\mathbb{N}\right).
\end{equation*}

Specifically, we call this system the Walsh-Paley system, when $m\equiv 2.$

The Vilenkin system is orthonormal and complete in $L_{2}\left( G_{m}\right)
$ (see \cite{AVD} and \cite{Vi}).

Now, we introduce analogues of the usual definitions in Fourier-analysis. If
$f\in L_{1}\left( G_{m}\right) $ we can define the Fourier coefficients, the
partial sums of the Fourier series, the Dirichlet kernels with respect to
the Vilenkin systems in the usual way:

\begin{eqnarray*}
\widehat{f}\left( n\right) &:&=\int_{G_{m}}f\overline{\psi }_{n}d\mu \,,%
\text{ \ \ \ }\left( n\in \mathbb{N}\right) \\
\text{ \ \ }S_{n}f &:&=\sum_{k=0}^{n-1}\widehat{f}\left( k\right) \psi _{k},%
\text{ \ \ \ }\left( n\in \mathbb{N}_{+}\right) \\
D_{n} &:&=\sum_{k=0}^{n-1}\psi _{k\text{ }},\text{\ \ \ \ \ \ \ \ \ \ \ }%
\left( n\in \mathbb{N}_{+}\right)
\end{eqnarray*}%
respectively.

Recall that
\begin{equation}
D_{M_{n}}\left( x\right) =\left\{
\begin{array}{ll}
M_{n}, & \text{if\thinspace \thinspace \thinspace }x\in I_{n}, \\
0, & \text{if}\,\,x\notin I_{n}.%
\end{array}%
\right.  \label{1dn}
\end{equation}

It is also known that (see \cite{AVD} and \cite{blahota})
\begin{equation}
D_{sM_{n}}=D_{M_{n}}\sum_{k=0}^{s-1}\psi
_{kM_{n}}=D_{M_{n}}\sum_{k=0}^{s-1}r_{n}^{k},  \label{2dn}
\end{equation}%
and
\begin{equation}
D_{n}=\psi _{n}\left( \sum_{j=0}^{\infty
}D_{M_{j}}\sum_{u=m_{j}-n_{j}}^{m_{j}-1}r_{j}^{u}\right) .  \label{3dn}
\end{equation}

Denote by $L_{n}$ the $n$-th Lebesgue constant%
\begin{equation*}
L_{n}:=\left\Vert D_{n}\right\Vert _{1}.
\end{equation*}

The norm (or quasi-norm) of the space $L_{p}(G_{m}^{2})$ is defined by \qquad

\begin{equation*}
\left\Vert f\right\Vert _{p}:=\left( \int_{G_{m}^{2}}\left\vert f\right\vert
^{p}d\mu \times d\mu \right) ^{1/p}\qquad \left( 0<p<\infty \right) .
\end{equation*}

The space $weak-L_{p}\left( G_{m}^{2}\right) $ consists of all measurable
functions $f,$ for which

\begin{equation*}
\left\Vert f\right\Vert _{weak-L_{p}}:=\underset{\lambda >0}{\sup }\lambda
\mu \left( f>\lambda \right) ^{1/p}<+\infty .
\end{equation*}

The rectangular partial sums of the 2-dimensional Vilenkin-Fourier series of
function $f\in L_{2}\left( G_{m}^{2}\right) $ are defined as follows:

\begin{equation*}
S_{M,N}f\left( x,y\right) :=\sum\limits_{i=0}^{M-1}\sum\limits_{j=0}^{N-1}%
\widehat{f}\left( i,j\right) \psi _{i}\left( x\right) \psi _{j}\left(
y\right) ,
\end{equation*}%
where the numbers
\begin{equation*}
\widehat{f}\left( i,j\right) =\int\limits_{G_{m}^{2}}f\left( x,y\right) \bar{%
\psi}_{i}\left( x\right) \bar{\psi}_{j}\left( y\right) d\mu \left( x,y\right)
\end{equation*}%
is said to be the $\left( i,j\right) $-th Vilenkin-Fourier coefficient of
the function \thinspace $f.$

Denote
\begin{equation*}
S_{M}^{\left( 1\right) }f\left( x,y\right) :=\int\limits_{G_{m}}f\left(
s,y\right) D_{M}\left( x-s\right) d\mu \left( s\right)
\end{equation*}%
and
\begin{equation*}
S_{N}^{\left( 2\right) }f\left( x,y\right) :=\int\limits_{G_{m}}f\left(
x,t\right) D_{N}\left( y-t\right) d\mu \left( t\right) .
\end{equation*}

The $\sigma $-algebra generated by the intervals $\left\{ I_{n}\left(
x\right) :x\in G_{m}\right\} $ will be denoted by $\digamma _{n}\left( n\in
\mathbb{N}\right) .$ Denote by $f=\left( f^{\left( n\right) },n\in \mathbb{N}%
\right) $ a martingale with respect to $\digamma _{n}\left( n\in \mathbb{N}%
\right) .$ (for details see e.g. \cite{Webook2}).

The one-dimensional Hardy space $H_{p}(G_{m})$ $\left( 0<p<\infty \right) $
consists of all martingales, for which

\begin{equation*}
\left\Vert f\right\Vert _{H_{p}}:=\left\Vert \sup_{n\in \mathbb{N}%
}\left\vert f_{n}\right\vert \right\Vert _{p}<\infty .
\end{equation*}

The $\sigma $-algebra generated by the 2-dimensional $I_{n}\left( x\right)
\times I_{n}\left( y\right) $ square of measure $M_{n}^{-1}\times M_{n}^{-1}$
will be denoted by $\digamma _{n,n}\left( n\in \mathbb{N}\right) .$ Denote
by $f=\left( f_{n,n}\text{ }n\in \mathbb{N}\right) $ one-parameter
martingale with respect to $\digamma _{n,n}\left( n\in \mathbb{N}\right) .$%
(for details see e.g. \cite{Webook1}).

The maximal function of a martingale $f$ \ is defined by

\begin{equation*}
f^{\ast }=\sup_{n\in \mathbb{N}}\left\vert f_{n,n}\right\vert .
\end{equation*}

Let $f\in L_{1}\left( G_{m}^{2}\right) $. Then the maximal function is given
by
\begin{equation*}
f^{\ast }\left( x,y\right) =\sup\limits_{n\in \mathbb{N}}\frac{1}{\mu \left(
I_{n}(x)\times I_{n}(y)\right) }\left\vert \int\limits_{I_{n}(x)\times
I_{n}(y)}f\left( s,t\right) d\mu \left( s,t\right) \right\vert ,\,\,
\end{equation*}%
where $\left( x,y\right) \in G_{m}^{2}.$

The two-dimensional Hardy space $H_{p}^{\square }(G_{m}^{2})$ $\left(
0<p<\infty \right) $ consists of all functions for which

\begin{equation*}
\left\Vert f\right\Vert _{H_{p}^{\square }}:=\left\Vert f^{\ast }\right\Vert
_{p}<\infty .
\end{equation*}

If $f\in L_{1}\left( G_{m}^{2}\right) ,$ then it is easy to show that the
sequence $\left( S_{M_{n},M_{n}}\left( f\right) :n\in \mathbb{N}\right) $ is
a martingale. If $f=\left( f_{n,n},n\in \mathbb{N}\right) $ is a martingale,
then the Vilenkin-Fourier coefficients must be defined in a slightly
different manner: $\qquad \qquad $
\begin{equation*}
\widehat{f}\left( i,j\right) :=\lim_{k\rightarrow \infty
}\int_{G_{m}^{2}}f_{k,k}\left( x,y\right) \overline{\psi }_{i}\left(
x\right) \overline{\psi }_{j}\left( y\right) d\mu \left( x,y\right) .
\end{equation*}

The Vilenkin-Fourier coefficients of $f\in L_{1}\left( G_{m}^{2}\right) $
are the same as those of the martingale $\left( S_{M_{n},M_{n}}f:n\in
\mathbb{N}\right) $ obtained from $f$ .

A bounded measurable function $a$ is a $p$-atom, if there exists a
\thinspace two-dimensional cube $I^{2}=I\times I\mathbf{,}$ such that
\begin{equation*}
\int_{I^{2}}ad\mu =0,\text{ \ \ \ \ \ }\left\Vert a\right\Vert _{\infty
}\leq \mu (I^{2})^{-1/p},\text{ \ \ \ \ \ supp}\left( a\right) \subset I^{2}.
\end{equation*}

\section{\textbf{MAIN RESULTS AND SOME OF THEIR CONSEQUENCES}}

Our first main result reads:

\begin{theorem}
\label{theorem1}Let $0<p\leq 1$ and $f\in H_{p}^{\square }\left(
G_{m}^{2}\right) $. Then
\begin{equation*}
\sum\limits_{n=1}^{\infty }\frac{\left\Vert S_{n,n}f\right\Vert _{p}^{p}}{%
n^{3-2p}\log ^{2\left[ p\right] }\left( n+1\right) }\leq c_{p}\left\Vert
f\right\Vert _{H_{p}^{\square }}^{p},
\end{equation*}%
where $\left[ p\right] $ denotes integer part of $p.$
\end{theorem}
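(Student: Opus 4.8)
The plan is to combine the atomic decomposition of $H_{p}^{\square }\left( G_{m}^{2}\right) $ with the $p$-subadditivity of the sublinear functional $f\mapsto \norm{S_{n,n}f}_{p}^{p}$, which is available because $0<p\leq 1$. Writing $f=\sum_{k}\lambda _{k}a_{k}$ with $p$-atoms $a_{k}$ and $\sum_{k}\abs{\lambda _{k}}^{p}\leq c\norm{f}_{H_{p}^{\square }}^{p}$, and using $\norm{g+h}_{p}^{p}\leq \norm{g}_{p}^{p}+\norm{h}_{p}^{p}$, the left-hand side is dominated by $\sum_{k}\abs{\lambda _{k}}^{p}\,T(a_{k})$, where $T(a):=\sum_{n\geq 1}\norm{S_{n,n}a}_{p}^{p}/(n^{3-2p}\log ^{2[p]}(n+1))$. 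Hence it suffices to prove $T(a)\leq c_{p}$ uniformly over all $p$-atoms $a$. Fix such an $a$ supported, after a translation, on $I_{N}\times I_{N}$, so $\norm{a}_{\infty }\leq M_{N}^{2/p}$ and $\int_{I_{N}^{2}}a\,d\mu =0$.

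First I would record that $S_{n,n}a=0$ for $n\leq M_{N}$: for $i,j<M_{N}$ the characters $\psi _{i},\psi _{j}$ are identically $1$ on $I_{N}$, so by the cancellation of $a$ every coefficient $\widehat{a}(i,j)$ with $i,j<M_{N}$ vanishes. Thus $T(a)$ runs only over $n>M_{N}$. For such $n$ I would estimate through the kernel: since $S_{n,n}a(x,y)=\int_{I_{N}^{2}}a(s,t)D_{n}(x-s)D_{n}(y-t)\,d\mu (s,t)$, the product bound $\abs{S_{n,n}a(x,y)}\leq M_{N}^{2/p}\phi _{n}(x)\phi _{n}(y)$ holds with $\phi _{n}(x):=\int_{I_{N}}\abs{D_{n}(x-s)}\,d\mu (s)$. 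Using (\ref{3dn}), (\ref{1dn}) and the multiplicativity $\psi _{n}(x-s)=\psi _{n}(x)\overline{\psi _{n}(s)}$, one checks that $\phi _{n}$ is constant on $I_{N}$, equal to $A_{n}:=\int_{I_{N}}\abs{D_{n}}\,d\mu $, whereas on each $I_{t}\setminus I_{t+1}$ with $t<N$ one has the clean identity $D_{n}(x-s)=D_{n}(x)\overline{\psi _{n}(s)}$ for $s\in I_{N}$, whence $\phi _{n}(x)=M_{N}^{-1}\abs{D_{n}(x)}$. Splitting by (\ref{add}) then gives $\int_{G_{m}}\phi _{n}^{p}\,d\mu =M_{N}^{-1}A_{n}^{p}+M_{N}^{-p}\int_{\overline{I_{N}}}\abs{D_{n}}^{p}\,d\mu $ and $\norm{S_{n,n}a}_{p}^{p}\leq M_{N}^{2}\big(\int_{G_{m}}\phi _{n}^{p}\,d\mu \big)^{2}$.

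For $0<p<1$ (so $[p]=0$) this already suffices. The point is that the $p$-th power tames the Dirichlet kernels: the block structure of $D_{n}$ on $I_{t}\setminus I_{t+1}$ gives $\int_{\overline{I_{N}}}\abs{D_{n}}^{p}\,d\mu \leq c_{p}\sum_{t<N}M_{t}^{p-1}\leq c_{p}$, while $A_{n}\leq c(\abs{n}-N+1)$. Feeding these into the geometric weight $n^{-(3-2p)}$ and summing over $n>M_{N}$, grouping $n$ by the value of $\abs{n}$ via (\ref{1a}), yields $T(a)\leq c_{p}$; the exponential growth of $M_{\abs{n}}$ against the merely polynomial growth of $A_{n}$, together with $\sum_{n>M_{N}}n^{-(3-2p)}\asymp M_{N}^{-(2-2p)}$, does the work.

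The genuinely hard case is $p=1$, where $[p]=1$ and the factor $\log ^{2}(n+1)$ is indispensable. Here the crude estimate fails: one has $\int_{\overline{I_{N}}}\abs{D_{n}}\,d\mu \asymp N$ and $A_{n}\asymp \abs{n}-N$ for typical $n$, so the product bound only gives $\norm{S_{n,n}a}_{1}\leq L_{n}^{2}$, and $\sum_{n}L_{n}^{2}/(n\log ^{2}(n+1))$ diverges. The remedy is to use the cancellation of $a$ instead of the triangle inequality. On $\overline{I_{N}}\times \overline{I_{N}}$ the identity $D_{n}(x-s)=D_{n}(x)\overline{\psi _{n}(s)}$ collapses the double integral to $S_{n,n}a(x,y)=D_{n}(x)D_{n}(y)\widehat{a}(n,n)$, so the whole contribution carries the small factor $\widehat{a}(n,n)$, constrained by $\sum_{n}\abs{\widehat{a}(n,n)}^{2}\leq \norm{a}_{2}^{2}\leq M_{N}^{2}$; on the mixed strips $I_{N}\times \overline{I_{N}}$ analogous one-variable Fourier coefficients of $a$ appear. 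I would then marry these $\ell ^{2}$ constraints to the sharp kernel bounds of Section~4, expressed through the arithmetic functions $v(n)$ and $v^{\ast }(n)$ of (\ref{100}), exploiting $N\leq c\log n$ for $n>M_{N}$ so that each factor $N$ is absorbed by one factor $\log (n+1)$ in the denominator, and counting the $n$ in each block $[M_{l},M_{l+1})$ according to the size of $v(n)$. Making this final summation converge uniformly in $N$ is where essentially all the difficulty resides, and is exactly the step for which the auxiliary estimates of Section~4 are built.
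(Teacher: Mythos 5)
Your reduction to a uniform bound over $p$-atoms, the observation that $S_{n,n}a=0$ for $n\leq M_{N}$, and your treatment of the case $0<p<1$ are sound. For $p<1$ your route is actually more elementary than the paper's: you never use the cancellation of $a$ beyond the vanishing of small-index coefficients, relying instead on $\norm{a}_{\infty}\leq M_{N}^{2/p}$, the bound $\int_{\overline{I}_{N}}\abs{D_{n}}^{p}d\mu \leq c_{p}$, the identity $D_{n}(x-s)=D_{n}(x)\overline{\psi _{n}(s)}$ for $x\notin I_{N}$, $s\in I_{N}$, and $A_{n}\leq c\left( \abs{n}-N+1\right) $ against the geometric decay of $n^{-(3-2p)}$; this closes correctly. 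The paper instead runs a single Parseval-type argument uniformly in $0<p\leq 1$.

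The genuine gap is in the case $p=1$, and it is quantitative, not a matter of bookkeeping. On $\overline{I}_{N}\times \overline{I}_{N}$ you correctly collapse $S_{n,n}a\left( x,y\right) =D_{n}\left( x\right) D_{n}\left( y\right) \widehat{a}\left( n,n\right) $, but the only constraint you retain, $\sum_{n}\abs{\widehat{a}(n,n)}^{2}\leq \norm{a}_{2}^{2}\leq M_{N}^{2}$ (Bessel for the sparse diagonal system $\psi _{n}\otimes \psi _{n}$), is too weak by a full factor of $M_{N}$. Indeed, using $\int_{\overline{I}_{N}}\abs{D_{n}}d\mu \leq cN$ and $N^{2}\leq c\log ^{2}\left( n+1\right) $ for $n\geq M_{N}$, your information yields only
\begin{equation*}
\sum_{n\geq M_{N}}\frac{\abs{\widehat{a}(n,n)}}{n\log ^{2}\left( n+1\right) }
\Bigl( \int_{\overline{I}_{N}}\abs{D_{n}}d\mu \Bigr) ^{2}\leq c\sum_{n\geq
M_{N}}\frac{\abs{\widehat{a}(n,n)}}{n}\leq c\norm{a}_{2}\Bigl( \sum_{n\geq
M_{N}}\frac{1}{n^{2}}\Bigr) ^{1/2}\leq cM_{N}^{1/2},
\end{equation*}
which is not uniform in $N$; moreover Cauchy--Schwarz is sharp under this constraint (take $\abs{\widehat{a}(n,n)}\propto 1/n$), so no recombination of the data you kept can repair it. The missing idea, which is the heart of the paper's proof, is that $\widehat{a}\left( n,n\right) =\widehat{\Phi }\left( n\right) $ where $\Phi \left( \tau \right) =\int_{I_{N}}a\left( \tau -t,t\right) d\mu \left( t\right) $ is a \emph{one-variable} function supported in $I_{N}$: Cauchy--Schwarz in the averaging variable gives $\norm{\Phi}_{2}^{2}\leq M_{N}^{-1}\norm{a}_{2}^{2}\leq M_{N}$, and one-dimensional Parseval applied to $\Phi $ supplies exactly the extra factor $M_{N}^{-1}$ that turns the display above into $O(1)$. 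The analogous averaging gain is also needed on the mixed strips, where the paper proves $\int_{I_{N}}\abs{\widehat{S}_{n}^{(2)}a(n,y)}^{p}d\mu \left( y\right) \leq c_{p}M_{N}^{1-p}$ by Cauchy--Schwarz in the inner variable plus $L^{2}$-boundedness; your sketch names the relevant coefficients but gives no such estimate. Finally, your proposed fallback --- the ``sharp kernel bounds of Section 4'' via $v\left( n\right) $, $v^{\ast }\left( n\right) $ and counting $n$ in blocks by the size of $v\left( n\right) $ --- is misdirected: the only auxiliary result there (Lemma \ref{lemma2}) is a \emph{lower} bound for averages of $v$, constructed for and used only in the sharpness counterexample (Theorem \ref{theorem2}); it plays no role in, and cannot substitute for, the Parseval step in Theorem \ref{theorem1}.
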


By using Theorem \ref{theorem1} we obtain the following:

\begin{corollary}
\label{corollary1}Let $0<p\leq 1$ and $f\in H_{p}^{\square }\left(
G_{m}^{2}\right) $. Then
\begin{equation*}
\sum\limits_{n=1}^{\infty }\frac{\left\Vert S_{n,n}f\right\Vert
_{H_{p}^{\square }}^{p}}{n^{3-2p}\log ^{2\left[ p\right] }\left( n+1\right) }%
\leq c_{p}\left\Vert f\right\Vert _{H_{p}^{\square }}^{p}.
\end{equation*}
\end{corollary}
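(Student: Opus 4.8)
The plan is to derive Corollary~\ref{corollary1} directly from Theorem~\ref{theorem1} by replacing, term by term, the Hardy norm $\left\Vert S_{n,n}f\right\Vert_{H_p^\square}$ in the numerator by the two quantities $\left\Vert f\right\Vert_{H_p^\square}$ and $\left\Vert S_{n,n}f\right\Vert_p$ that we already control. The mechanism is a pointwise bound on the maximal function of the quadratic partial sum. Since $S_{n,n}f$ is, for each fixed $n$, a Vilenkin polynomial (a bounded function), its Hardy norm is $\left\Vert\left(S_{n,n}f\right)^{\ast}\right\Vert_p$, where the maximal function is taken with respect to the filtration $\digamma_{j,j}$; because $S_{M_j,M_j}$ coincides with the conditional expectation onto $\digamma_{j,j}$, this reads
\[
\left(S_{n,n}f\right)^{\ast}=\sup_{j\in\mathbb N}\left\vert S_{M_j,M_j}\left(S_{n,n}f\right)\right\vert.
\]

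First I would record the commutation identity for the truncation operators. Because $S_{M_j,M_j}$ retains exactly the frequencies below $M_j$ in each variable while $S_{n,n}$ retains those below $n$, their composition keeps only the frequencies below $\min(M_j,n)$, so that
\[
S_{M_j,M_j}\left(S_{n,n}f\right)=S_{\min(M_j,n),\,\min(M_j,n)}f.
\]
Fix $k$ with $M_k\leq n<M_{k+1}$. For $j\leq k$ the right-hand side equals $S_{M_j,M_j}f$, while for $j\geq k+1$ it equals $S_{n,n}f$. Taking the supremum over $j$ and using $\sup_{j\leq k}\left\vert S_{M_j,M_j}f\right\vert\leq f^{\ast}$ therefore gives
\[
\left(S_{n,n}f\right)^{\ast}=\max\left(\sup_{j\leq k}\left\vert S_{M_j,M_j}f\right\vert,\ \left\vert S_{n,n}f\right\vert\right)\leq f^{\ast}+\left\vert S_{n,n}f\right\vert.
\]

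Next, using that $t\mapsto t^{p}$ is subadditive for $0<p\leq 1$, I would integrate the $p$-th power of this pointwise inequality to obtain
\[
\left\Vert S_{n,n}f\right\Vert_{H_p^\square}^{p}\leq\left\Vert f^{\ast}\right\Vert_p^{p}+\left\Vert S_{n,n}f\right\Vert_p^{p}=\left\Vert f\right\Vert_{H_p^\square}^{p}+\left\Vert S_{n,n}f\right\Vert_p^{p}.
\]
Substituting this into the sum of the corollary splits it into two pieces. The second piece is bounded by $c_p\left\Vert f\right\Vert_{H_p^\square}^{p}$ by Theorem~\ref{theorem1}. The first piece is $\left\Vert f\right\Vert_{H_p^\square}^{p}$ times the numerical series
\[
\sum_{n=1}^{\infty}\frac{1}{n^{3-2p}\log^{2[p]}(n+1)},
\]
which converges: for $0<p<1$ one has $[p]=0$ and $3-2p>1$, while for $p=1$ one has $3-2p=1$ and the factor $\log^{2}(n+1)$ secures convergence. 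This yields the claimed bound.

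The argument is essentially bookkeeping; the only real point to get right is the maximal-function identity, and in particular the splitting of the supremum at the index $k=\left\vert n\right\vert$. The estimate $\left(S_{n,n}f\right)^{\ast}\leq f^{\ast}+\left\vert S_{n,n}f\right\vert$ is what makes the passage from the $L_p$ statement of Theorem~\ref{theorem1} to the $H_p^\square$ statement of the corollary costless (up to the convergent constant series), so no genuinely new harmonic-analytic input is needed beyond Theorem~\ref{theorem1} itself.
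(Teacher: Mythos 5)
Your proposal is correct and follows essentially the same route as the paper: the commutation identity $S_{M_j,M_j}S_{n,n}f=S_{\min(M_j,n),\min(M_j,n)}f$, the resulting bound $\left\Vert S_{n,n}f\right\Vert_{H_p^\square}^p\leq\left\Vert f\right\Vert_{H_p^\square}^p+\left\Vert S_{n,n}f\right\Vert_p^p$, and the splitting of the sum into the Theorem~\ref{theorem1} piece plus a convergent numerical series are exactly the paper's argument. Your write-up is in fact slightly more careful, since you make the $p$-th power subadditivity and the convergence of $\sum_n 1/\bigl(n^{3-2p}\log^{2[p]}(n+1)\bigr)$ explicit.
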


Also this result is sharp in the following important sense:

\begin{theorem}
\label{theorem2}a) Let $0<p<1$ and $\Phi :\mathbb{N}\rightarrow \lbrack
1,\infty )$ be any non-decreasing function, satisfying the condition $%
\underset{n\rightarrow \infty }{\lim }\Phi \left( n\right) =+\infty .$ Then
there exists a martingale $f\in H_{p}^{\square }\left( G_{m}^{2}\right) $
such that%
\begin{equation*}
\underset{n=1}{\overset{\infty }{\sum }}\frac{\left\Vert S_{n,n}f\right\Vert
_{weak-L_{p}}^{p}\Phi \left( n\right) }{n^{3-2p}}=\infty .
\end{equation*}%
b) Let $\Phi :\mathbb{N}\rightarrow \lbrack 1,\infty )$ be any
non-decreasing function, satisfying the condition $\underset{n\rightarrow
\infty }{\lim }\Phi \left( n\right) =+\infty .$ Then there exists a
martingale $f\in H_{1}^{\square }\left( G_{m}^{2}\right) $ such that%
\begin{equation*}
\underset{n=1}{\overset{\infty }{\sum }}\frac{\left\Vert S_{n,n}f\right\Vert
_{1}\Phi \left( n\right) }{n\log ^{2}\left( n+1\right) }=\infty .
\end{equation*}
\end{theorem}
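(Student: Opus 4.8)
The plan is to produce, for a given $\Phi$, one martingale $f=\sum_{k}\lambda_{k}a_{k}$ built as a lacunary series of $p$-atoms living on shrinking squares $I_{\alpha_{k}}\times I_{\alpha_{k}}$, and to balance two opposing demands: the summability $\sum_{k}\lambda_{k}^{p}<\infty$, which forces $f\in H_{p}^{\square}$ through the atomic characterization of the Hardy space, against divergence of the $\Phi$-weighted series. The hypothesis $\lim_{n}\Phi(n)=+\infty$ is exactly what makes this balance feasible: it lets me select scales $\alpha_{0}<\alpha_{1}<\cdots$ growing so fast that $\Phi(M_{\alpha_{k}})$ is arbitrarily large, while the coefficients $\lambda_{k}$ stay $\ell^{p}$-summable. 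Since the bound being contradicted is sharp for Theorem \ref{theorem1}, each block contributes only a bounded amount to the unweighted series, and the extra factor $\Phi\geq\Phi(M_{\alpha_{k}})$ on the $k$-th block is what manufactures divergence.

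For the building blocks I take $g_{k}:=D_{M_{\alpha_{k}+1}}-D_{M_{\alpha_{k}}}=\sum_{i=M_{\alpha_{k}}}^{M_{\alpha_{k}+1}-1}\psi_{i}$ and set $a_{k}:=cM_{\alpha_{k}}^{2/p-2}g_{k}(x)g_{k}(y)$, where $c$ depends only on $\sup_{n}m_{n}$. By (\ref{1dn}), $g_{k}$ is supported on $I_{\alpha_{k}}$, has $\int_{G_{m}}g_{k}\,d\mu=0$ and $\left\Vert g_{k}\right\Vert_{\infty}\leq\sup_{n}m_{n}\cdot M_{\alpha_{k}}$; hence $a_{k}$ is supported on the square $I_{\alpha_{k}}\times I_{\alpha_{k}}$ of measure $M_{\alpha_{k}}^{-2}$, satisfies $\int a_{k}\,d\mu\times d\mu=0$ and, after fixing $c$, obeys $\left\Vert a_{k}\right\Vert_{\infty}\leq M_{\alpha_{k}}^{2/p}=\mu(I_{\alpha_{k}}^{2})^{-1/p}$, so each $a_{k}$ is a $p$-atom. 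With $\lambda_{k}>0$ and $\sum_{k}\lambda_{k}^{p}<\infty$, the standard atomic decomposition for the one-parameter space $H_{p}^{\square}$ gives $f:=\sum_{k}\lambda_{k}a_{k}\in H_{p}^{\square}$ with $\left\Vert f\right\Vert_{H_{p}^{\square}}^{p}\lesssim\sum_{k}\lambda_{k}^{p}$.

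The core step is to evaluate the square partial sums block by block. Because $\widehat{a_{k}}(i,j)\neq0$ only when $M_{\alpha_{k}}\leq i,j<M_{\alpha_{k}+1}$, for every $n$ with $M_{\alpha_{k}}\leq n\leq M_{\alpha_{k}+1}$ the atoms $a_{l}$ with $l>k$ are annihilated by $S_{n,n}$, those with $l<k$ are reproduced exactly and together form a term whose norm is bounded uniformly in $k$ (each $p$-atom has $\left\Vert a_{l}\right\Vert_{p}\leq1$), while
\begin{equation*}
S_{n,n}a_{k}=cM_{\alpha_{k}}^{2/p-2}\bigl(D_{n}-D_{M_{\alpha_{k}}}\bigr)(x)\bigl(D_{n}-D_{M_{\alpha_{k}}}\bigr)(y).
\end{equation*}
Writing $n=M_{\alpha_{k}}+r$ and using $D_{M_{\alpha_{k}}+r}-D_{M_{\alpha_{k}}}=\psi_{M_{\alpha_{k}}}D_{r}$ together with (\ref{2dn})--(\ref{3dn}), matters reduce to the size of the product kernel $D_{r}\otimes D_{r}$. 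In case (b) one gets $\left\Vert S_{n,n}a_{k}\right\Vert_{1}=cL_{r}^{2}$, and summing over the block while invoking the classical growth $M_{\alpha_{k}}^{-1}\sum_{r<M_{\alpha_{k}}}L_{r}\asymp\log M_{\alpha_{k}}\asymp\alpha_{k}$ (valid on bounded Vilenkin groups), the Cauchy--Schwarz lower bound $\sum_{r<M_{\alpha_{k}}}L_{r}^{2}\gtrsim M_{\alpha_{k}}\alpha_{k}^{2}$ and the counting of admissible indices in (\ref{1a}) exactly matches the factor $\log^{2}(n+1)$ present in the denominator. In case (a) a distributional estimate for $D_{r}\otimes D_{r}$, read off from (\ref{3dn}), shows that for a positive proportion of indices $r<M_{\alpha_{k}}$ one has $\left\Vert D_{r}\otimes D_{r}\right\Vert_{weak-L_{p}}^{p}\gtrsim c_{p}$, whence $\sum_{r<M_{\alpha_{k}}}\left\Vert D_{r}\otimes D_{r}\right\Vert_{weak-L_{p}}^{p}\gtrsim M_{\alpha_{k}}$. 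In either case, after absorbing the bounded interference of the lower-order atoms by taking the gaps $\alpha_{k+1}-\alpha_{k}$ large, the $\Phi$-weighted contribution of the $k$-th block is bounded below by a constant multiple of $\lambda_{k}^{p}\Phi(M_{\alpha_{k}})$ in case (a) and of $\lambda_{k}\Phi(M_{\alpha_{k}})$ in case (b).

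It then remains to fix the parameters: take $\lambda_{k}=2^{-k/p}$ so that $\sum_{k}\lambda_{k}^{p}=\sum_{k}2^{-k}<\infty$, and, using $\Phi\uparrow\infty$, choose $\alpha_{k}$ increasing so rapidly that $\lambda_{k}^{p}\Phi(M_{\alpha_{k}})\geq1$ (respectively $\lambda_{k}\Phi(M_{\alpha_{k}})\geq1$ in case (b)); then the weighted series has infinitely many terms bounded away from zero and diverges, while $f\in H_{p}^{\square}$. The main obstacle is the kernel step: proving the sharp lower bounds for the (weak-)$L_{p}$ mass of the truncated product kernels $\bigl(D_{n}-D_{M_{\alpha_{k}}}\bigr)\otimes\bigl(D_{n}-D_{M_{\alpha_{k}}}\bigr)$ -- in particular that a definite fraction of the indices in each block carries the full $\log^{2}$ weight when $p=1$ and the full weak-$L_{p}$ mass when $0<p<1$, uniformly over bounded Vilenkin groups. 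This is genuinely delicate and is best isolated as an auxiliary lemma; matching the exponent $3-2p$ and the power of the logarithm exactly, so that no increasing factor $\Phi$ can be accommodated, and controlling the interference of the already-converged atoms, are the remaining subtleties, both routine once the kernel estimate is in hand.
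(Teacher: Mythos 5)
Your proposal is correct in outline and is essentially the paper's own argument: your atoms are exactly the paper's $a_{k}$, your block-by-block evaluation of $S_{n,n}f$ through $D_{M_{\alpha _{k}}+r}-D_{M_{\alpha _{k}}}=\psi _{M_{\alpha _{k}}}D_{r}$ is identity (\ref{13e}), your ``positive proportion of indices $r$'' carrying full weak-$L_{p}$ mass in part (a) is precisely the paper's restriction to $n\in \mathbb{N}_{n_{0}}$ (for which $\left\vert D_{r}\right\vert =1$ on $G_{m}\setminus I_{1}$ by (\ref{3dn})), counted via (\ref{1a}), part (b) is the same Cauchy--Schwarz step, and your parametrization $\lambda _{k}=2^{-k/p}$ with $\Phi \left( M_{\alpha _{k}}\right) \geq 2^{k}$ is an equivalent variant of the paper's $\lambda _{k}=\lambda ^{2}\Phi ^{-1/4}\left( M_{\alpha _{k}}\right) $ with $\sum_{k}\Phi ^{-p/4}\left( M_{\alpha _{k}}\right) <\infty $. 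The one substantive item you leave unproved --- the ``classical'' growth $M^{-1}\sum_{r<M}L_{r}\gtrsim \log M$ on bounded Vilenkin groups, which drives part (b) --- is exactly what the paper must supply itself: it follows from Lukyanenko's inequality $L_{n}\geq v\left( n\right) /\left( 4\lambda \right) $ combined with the paper's new Lemma \ref{lemma2} (the elementary recursion giving $\sum_{r<M_{n}}v\left( r\right) \geq 2nM_{n}/\lambda ^{2}$); this average is classical for the Walsh system but not off-the-shelf in the general bounded Vilenkin setting, so your proof is complete only modulo proving that lemma. A minor simplification you missed: in part (a) the interference of the lower-order atoms need not be ``absorbed by taking gaps large,'' since every $D_{M_{s}}$ with $s\geq 2$ vanishes on $G_{m}\setminus I_{1}$, so that term is identically zero on the set $\left( G_{m}\setminus I_{1}\right) \times \left( G_{m}\setminus I_{1}\right) $ where the lower bound is taken --- this also spares you the weak-$L_{p}$ quasi-norm subtraction, whose negative part involves $\Phi \left( M_{\alpha _{k}+1}\right) $ and would need its own (routine but unaddressed) control.
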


We also apply Theorem \ref{theorem1} to obtain that the following is true:

\begin{corollary}
\label{corollary2}Let $0<p\leq 1$ and $f\in H_{p}^{\square }\left(
G_{m}^{2}\right) $. Then
\begin{equation*}
\sum\limits_{n=1}^{\infty }\frac{\left\vert S_{n,n}f\left( x,y\right)
\right\vert ^{p}}{n^{3-2p}\log ^{2\left[ p\right] }\left( n+1\right) }%
<\infty ,\text{ \ \ a.e. \ \ }\left( x,y\right) \in G_{m}^{2}.
\end{equation*}
\end{corollary}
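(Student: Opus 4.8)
The plan is to deduce this pointwise almost-everywhere statement directly from the norm inequality of Theorem \ref{theorem1} by a routine monotone-convergence argument; there is essentially no genuine obstacle here, only a legitimacy-of-interchange point to check. First I would introduce the nonnegative measurable partial sums
\begin{equation*}
g_{N}(x,y):=\sum_{n=1}^{N}\frac{\abs{S_{n,n}f(x,y)}^{p}}{n^{3-2p}\log ^{2[p]}(n+1)},
\end{equation*}
which form a nondecreasing sequence in $N$ and converge pointwise (in $[0,\infty ]$) to the full series $g(x,y):=\sum_{n=1}^{\infty }\abs{S_{n,n}f(x,y)}^{p}/(n^{3-2p}\log ^{2[p]}(n+1))$. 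Because every summand is nonnegative, the Monotone Convergence Theorem (equivalently, Tonelli's theorem applied to a series of nonnegative measurable functions) justifies interchanging the summation over $n$ with the integration over $G_{m}^{2}$.

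Carrying out that interchange, I would compute
\begin{equation*}
\int_{G_{m}^{2}}g\,d\mu \times d\mu =\sum_{n=1}^{\infty }\frac{1}{n^{3-2p}\log ^{2[p]}(n+1)}\int_{G_{m}^{2}}\abs{S_{n,n}f(x,y)}^{p}\,d\mu \times d\mu =\sum_{n=1}^{\infty }\frac{\norm{S_{n,n}f}_{p}^{p}}{n^{3-2p}\log ^{2[p]}(n+1)},
\end{equation*}
where the final equality merely recognizes the inner integral as $\norm{S_{n,n}f}_{p}^{p}$ by the definition of the $L_{p}(G_{m}^{2})$ (quasi-)norm. By Theorem \ref{theorem1} the right-hand side is at most $c_{p}\norm{f}_{H_{p}^{\square }}^{p}$, which is finite since by hypothesis $f\in H_{p}^{\square }(G_{m}^{2})$.

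Consequently $g\in L_{1}(G_{m}^{2})$, and any nonnegative integrable function is finite almost everywhere. This finiteness of $g(x,y)$ for a.e. $(x,y)\in G_{m}^{2}$ is exactly the claimed convergence of the series, so the corollary follows. The only step requiring any care is the interchange of sum and integral, and this is fully licensed by the nonnegativity of the terms together with the finiteness of the majorizing sum supplied by Theorem \ref{theorem1}.
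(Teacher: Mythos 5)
Your proof is correct and is exactly the argument the paper intends: Corollary \ref{corollary2} is stated as a direct consequence of Theorem \ref{theorem1} (the paper gives no separate proof), and the routine deduction is precisely your Tonelli/monotone-convergence interchange showing the series is integrable over $G_{m}^{2}$, hence finite almost everywhere. The only care needed is the nonnegativity justifying the interchange, which you address explicitly.
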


\begin{center}
\bigskip
\end{center}

\section{\textbf{AUXILIARY RESULTS}}

\bigskip In order to prove our main results we need the following lemma of
Weisz (for details see e.g. \cite{Webook1})

\begin{lemma}
\label{lemma1} A martingale $f\in L_{p}\left( G_{m}^{2}\right) $ is in $%
H_{p}^{\square }\left( G_{m}^{2}\right) \left( 0<p\leq 1\right) $ if and
only if there exist a sequence $\left( a_{k},k\in \mathbb{N}\right) $ of
p-atoms and a sequence $\left( \mu _{k},k\in \mathbb{N}\right) $ of real
numbers such that
\begin{equation}
\qquad \sum_{k=0}^{\infty }\mu _{k}S_{M_{n},M_{n}}a_{k}=f_{n,n},\text{ \ \ \
\ \ a.e.,}  \label{a1}
\end{equation}%
and%
\begin{equation*}
\qquad \sum_{k=0}^{\infty }\left\vert \mu _{k}\right\vert ^{p}<\infty ,
\end{equation*}%
Moreover,
\begin{equation*}
\left\Vert f\right\Vert _{H_{p}}\backsim \inf \left( \sum_{k=0}^{\infty
}\left\vert \mu _{k}\right\vert ^{p}\right) ^{1/p},
\end{equation*}%
where the infimum is taken over all decompositions of $f$ of the form (\ref%
{a1}).
\end{lemma}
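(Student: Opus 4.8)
This is the classical atomic decomposition of a one-parameter martingale Hardy space (due to Weisz), and I would establish the two implications separately, together with the norm equivalence. Throughout I use that, by (\ref{1dn}), $D_{M_n}$ equals $M_n$ on $I_n$ and vanishes off $I_n$, so that $S_{M_n,M_n}$ is exactly the conditional expectation with respect to $\digamma_{n,n}$; in particular $f^{\ast}=\sup_n|f_{n,n}|$ is the dyadic (martingale) maximal function.

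\textbf{Sufficiency.} Assume $f$ has a representation as in (\ref{a1}) with $\sum_k|\mu_k|^p<\infty$. Then $f^{\ast}\le\sum_k|\mu_k|\,a_k^{\ast}$, and since $0<p\le1$ the $p$-subadditivity of $t\mapsto t^p$ yields $\|f\|_{H_p^{\square}}^p=\|f^{\ast}\|_p^p\le\sum_k|\mu_k|^p\|a_k^{\ast}\|_p^p$. Hence it suffices to bound $\|a^{\ast}\|_p^p$ uniformly over $p$-atoms $a$. If $a$ is supported on a square $I^2=I\times I$ and has mean zero there, then for every $n$ for which $I_n(x)\times I_n(y)\supseteq I^2$ the mean-zero property forces the corresponding conditional expectation of $a$ to vanish, while for finer $n$ the average defining it is taken inside $I^2$; thus $a^{\ast}$ is supported on $I^2$ and bounded by $\|a\|_{\infty}\le\mu(I^2)^{-1/p}$, so $\|a^{\ast}\|_p^p\le\mu(I^2)\,\mu(I^2)^{-1}=1$. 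This gives $\|f\|_{H_p^{\square}}^p\le\sum_k|\mu_k|^p$.

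\textbf{Necessity.} The substantial direction is to manufacture atoms from a martingale with $\|f^{\ast}\|_p<\infty$; note that (\ref{a1}) evaluated at $n=0$ forces $\widehat f(0,0)=0$, which I assume. For $k\in\mathbb Z$ set $\Omega_k=\{f^{\ast}>2^k\}$; this is a decreasing family, each a disjoint union of maximal squares $I^{k,i}$ (read off from the stopping times $\tau_k=\inf\{n:|f_{n,n}|>2^k\}$), with $\sum_i\mu(I^{k,i})=\mu(\Omega_k)$. The key structural observation is that if $I^{k,i}$ has level $\ell$, then its predecessor square of level $\ell-1$ is not contained in $\Omega_k$, whence $|f_{m,m}|\le2^k$ on $I^{k,i}$ for all $m\le\ell-1$. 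From $f$ one then extracts on each $I^{k,i}$ a mean-zero piece $a_{k,i}$, normalized by $\mu_{k,i}:=c\,2^k\mu(I^{k,i})^{1/p}$ so that $\|a_{k,i}\|_{\infty}\le\mu(I^{k,i})^{-1/p}$ and, after re-indexing the pairs $(k,i)$ as a single sequence, $\sum_{k,i}\mu_{k,i}S_{M_n,M_n}a_{k,i}=f_{n,n}$. Finally, by the layer-cake comparison,
\[
\sum_{k,i}|\mu_{k,i}|^p=c^p\sum_{k\in\mathbb Z}2^{kp}\mu(\Omega_k)\backsim c^p\int_{G_m^2}(f^{\ast})^p=c^p\|f\|_{H_p^{\square}}^p,
\]
which combined with the sufficiency bound gives $\|f\|_{H_p}\backsim\inf\bigl(\sum_k|\mu_k|^p\bigr)^{1/p}$.

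\textbf{Main obstacle.} I expect the decisive work to lie entirely in the necessity direction, and within it in producing pieces that are genuine $L_{\infty}$-normalized atoms while still reconstructing $f$. The support and mean-zero conditions follow formally from localizing to the maximal squares and from the martingale (optional stopping) identity $\int_{I^{k,i}}f_{\tau_{k+1}}=\int_{I^{k,i}}f_{\tau_k}$; the delicate point is the uniform height bound, since a single martingale increment at the stopping scale $\ell$ may overshoot the threshold $2^k$, and controlling this overshoot (so that the normalization by $c\,2^k\mu(I^{k,i})^{1/p}$ is legitimate) is the crux on which the whole decomposition rests.
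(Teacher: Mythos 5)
First, a point of comparison: the paper itself does not prove Lemma \ref{lemma1} at all --- it is quoted as a known theorem of Weisz with a pointer to \cite{Webook1}, so what you are attempting is a reconstruction of a cited result whose proof occupies a substantial part of that book. Your sufficiency half is correct and complete: since two cells of the diagonal filtration are always nested or disjoint, the martingale maximal function of a $p$-atom vanishes off its supporting square, giving $\left\Vert a^{\ast }\right\Vert _{p}\leq 1$, and $p$-subadditivity finishes the argument (your observation that (\ref{a1}) forces $\widehat{f}(0,0)=0$ is also a fair catch of a sloppiness in the statement).

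The necessity half, however, contains a genuine gap, and it is exactly the point you yourself flag as ``the crux'': the atoms are never constructed, and the construction you sketch cannot be completed as described. Stopping at the first level where $\left\vert f_{n,n}\right\vert $ exceeds $2^{k}$ and hoping the stopped value is $\leq c2^{k}$ fails: a martingale can jump arbitrarily high at the stopping time. Indeed, take $f_{0,0}=0$ and $f_{n,n}=g$ for all $n\geq 1$, where $g$ is $\digamma _{1,1}$-measurable with zero mean, equal to a huge constant $M$ on one cell and compensating on the remaining cells; every threshold $2^{k}<M$ is overshot by $M-2^{k}$, so no pointwise bound of the overshoot in terms of the threshold exists, and the $L_{\infty }$ normalization of your pieces cannot be obtained along this route. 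The missing ingredient is the regularity of the diagonal filtration, i.e.\ precisely the standing hypothesis $\sup_{n}m_{n}=\lambda <\infty $, which your argument never invokes --- and without which the lemma is in fact false for $p<1$. Regularity is used not to bound the jump by previous values of the martingale (the example above shows this is impossible), but to produce a \emph{predictable} majorant: applying the regularity inequality $g\leq \lambda ^{2}E_{n-1}g$ (valid for nonnegative $\digamma _{n}$-measurable $g$, since each cell of $\digamma _{n-1,n-1}$ has at most $\lambda ^{2}$ children of proportional measure) to $\left\vert d_{n}\right\vert ^{2}$ gives $\left\vert d_{n}\right\vert \leq \lambda s_{n}(f)$, where $s_{n}(f)=(\sum_{j\leq n}E_{j-1}\left\vert d_{j}\right\vert ^{2})^{1/2}$ is $\digamma _{n-1}$-measurable; hence $\left\vert f_{n,n}\right\vert \leq f_{n-1}^{\ast }+\lambda s_{n}(f)=:\rho _{n}$ with $(\rho _{n})$ predictable and nondecreasing. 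Stopping times defined from $(\rho _{n})$ stop strictly \emph{before} the jump, so the stopped martingales obey exact bounds $\leq 2^{k}$; the differences of consecutive stopped martingales then split, over the cells of $\{\nu _{k}=n\}$, into genuine cube-supported $p$-atoms; and the coefficient estimate requires in addition the regular-basis inequality $\left\Vert s(f)\right\Vert _{p}\leq c_{p}\left\Vert f^{\ast }\right\Vert _{p}$. This chain --- predictable jump control via regularity, the space of predictably controlled martingales, and its atomic decomposition --- is the actual content of Weisz's proof, and none of it can be skipped or replaced by the maximal-square sketch you give.
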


We also state a new lemma, which we need for the proofs of our main results
but which is also of independent interest:

\begin{lemma}
\label{lemma2}Let $n\in \mathbb{N}$. Then
\begin{equation*}
\frac{1}{nM_{n}}\underset{k=1}{\overset{M_{n}-1}{\sum }}v\left( k\right)
\geq \frac{2}{\lambda ^{2}},
\end{equation*}%
where $\lambda =\sup_{n\in
\mathbb{N}
}m_{n}.$
\end{lemma}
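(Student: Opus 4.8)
The plan is to evaluate the normalized sum $\frac{1}{M_n}\sum_{k=1}^{M_n-1}v(k)$ \emph{exactly} and then read off the claimed inequality. The key observation is that every $k$ with $0\le k<M_n$ has digit expansion $k=\sum_{j=0}^{n-1}k_jM_j$ with $k_j\in\{0,\dots,m_j-1\}$, so that $\delta_j(k)=\operatorname{sign}(k_j)=0$ for all $j\ge n$. Hence, in the defining sum $v(k)=\delta_0(k)+\sum_{j=1}^{\infty}|\delta_{j+1}(k)-\delta_j(k)|$ only the indices $j=1,\dots,n-1$ can contribute, and the edge index $j=n-1$ collapses to $|\delta_n-\delta_{n-1}|=\delta_{n-1}$ since $\delta_n=0$. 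As $v(0)=0$, the summation may be started at $k=0$ without changing anything.

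Next I would interchange the finite sums and compute each contribution using that the digits $k_0,\dots,k_{n-1}$ are independent and uniformly distributed as $k$ ranges over $\{0,\dots,M_n-1\}$. A single digit $k_j$ is nonzero for a fraction $(m_j-1)/m_j$ of the $M_n$ values of $k$, which gives $\sum_{k=0}^{M_n-1}\delta_0(k)=\frac{m_0-1}{m_0}M_n$ and, for the edge term, $\sum_{k=0}^{M_n-1}\delta_{n-1}(k)=\frac{m_{n-1}-1}{m_{n-1}}M_n$. For an interior index $1\le j\le n-2$ the quantity $|\delta_{j+1}-\delta_j|$ equals $1$ precisely when exactly one of $k_j,k_{j+1}$ is nonzero, which by independence occurs for a fraction $\frac{m_j-1}{m_jm_{j+1}}+\frac{m_{j+1}-1}{m_jm_{j+1}}=\frac{m_j+m_{j+1}-2}{m_jm_{j+1}}$ of all $k$. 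Collecting terms yields, for $n\ge 2$, the exact identity
\[
\frac{1}{M_n}\sum_{k=1}^{M_n-1}v(k)=\frac{m_0-1}{m_0}+\sum_{j=1}^{n-2}\frac{m_j+m_{j+1}-2}{m_jm_{j+1}}+\frac{m_{n-1}-1}{m_{n-1}},
\]
which is a sum of exactly $n$ nonnegative terms.

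Finally I would bound each of these $n$ terms below by $2/\lambda^2$. For an interior term the numerator satisfies $m_j+m_{j+1}-2\ge 2$ (each $m_i\ge 2$) while the denominator satisfies $m_jm_{j+1}\le\lambda^2$, so the term is at least $2/\lambda^2$. For the two boundary terms, $\frac{m_0-1}{m_0}\ge\frac12$ and $\frac{m_{n-1}-1}{m_{n-1}}\ge\frac12$, and since $\lambda\ge 2$ one has $\frac12\ge 2/\lambda^2$. Summing the $n$ lower bounds gives $\frac{1}{M_n}\sum_{k=1}^{M_n-1}v(k)\ge 2n/\lambda^2$, and dividing by $n$ is exactly the assertion. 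The main point requiring care is the bookkeeping at the two ends of the digit range: the separate role of $\delta_0$, which is not a difference term, and the collapse of the $j=n-1$ term when $\delta_n=0$. One must check that together these produce precisely $n$ summands so that the count matches the factor $1/n$; the degenerate case $n=1$, where the $\delta_0$ term is the only contribution and the displayed identity does not apply, is verified directly as $\frac{m_0-1}{m_0}\ge\frac12\ge 2/\lambda^2$.
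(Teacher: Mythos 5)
Your proof is correct and complete, and it takes a genuinely different route from the paper's. The paper proceeds recursively: with $T(M_k)=\sum_{j=0}^{M_k-1}v(j)$ it splits the block $[M_{k-1},M_k)$ into the translates $n+rM_{k-1}$, $1\le r\le m_{k-1}-1$, observes that $v(n+rM_{k-1})=v(n)+2$ for $n<M_{k-2}$ while $v(n+rM_{k-1})=v(n)$ for $M_{k-2}\le n<M_{k-1}$, deduces the recursion $T(M_k)=m_{k-1}T(M_{k-1})+2M_{k-2}$, and then telescopes the normalized quantity $A(k)=T(M_k)/M_k$ via $A(k)\ge A(k-1)+2/\lambda^2$. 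You instead evaluate the average exactly: as $k$ runs through $\{0,\dots,M_n-1\}$ the digit vector $(k_0,\dots,k_{n-1})$ runs exactly once through $Z_{m_0}\times\cdots\times Z_{m_{n-1}}$, so each of the $n$ contributions to $v$ (the $\delta_0$ term, the $n-2$ interior difference terms, and the collapsed edge term $\delta_{n-1}$) can be counted in closed form, and each is bounded below by $2/\lambda^2$. Your route buys an exact identity instead of a one-sided recursion; it makes the constant and the role of the boundedness hypothesis transparent (in the Walsh case $m\equiv 2$ every term equals $1/2=2/\lambda^2$, so your bound is attained with equality); and it automatically settles the boundary bookkeeping that is the delicate point of the paper's argument --- indeed, since $v$ contains $\delta_0$ itself rather than $\left\vert \delta_1-\delta_0\right\vert$, the paper's termwise transition rule breaks down at $k=2$, and its recursion taken as an equality should read $T(M_k)=m_{k-1}T(M_{k-1})+2\left(m_{k-1}-1\right)M_{k-2}$; these slips only err in the harmless direction for a lower bound, but your direct count avoids them altogether. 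Your separate verification of the degenerate case $n=1$, where the displayed identity does not apply, is exactly the care this lemma requires.
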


\begin{proof}[\textbf{Proof} \textbf{of} \textbf{Lemma \protect\ref{lemma2}.}%
]
Let $M_{k-1}\leq n<M_{k}.$ Then $n=n_{k-1}M_{k-1}+n^{\left( 1\right) },$
where $n^{\left( 1\right) }<M_{k-1}.$ It is easy to show that

\begin{equation*}
\underset{n=M_{k-1}}{\overset{M_{k}-1}{\sum }}v\left( n\right) =\underset{r=1%
}{\overset{m_{k-1}-1}{\sum }}\underset{n=rM_{k-1}}{\overset{\left(
r+1\right) M_{k-1}-1}{\sum }}v\left( n\right) =\underset{r=1}{\overset{%
m_{k-1}-1}{\sum }}\underset{n=0}{\overset{M_{k-1}-1}{\sum }}v\left(
n+rM_{k-1}\right)
\end{equation*}%
\begin{equation*}
=\underset{r=1}{\overset{m_{k-1}-1}{\sum }}\underset{n=0}{\overset{M_{k-2}-1}%
{\sum }}v\left( n+rM_{k-1}\right) +\underset{r=1}{\overset{m_{k-1}-1}{\sum }}%
\underset{n=M_{k-2}}{\overset{M_{k-1}-1}{\sum }}v\left( n+rM_{k-1}\right)
\end{equation*}%
\begin{equation*}
=\underset{r=1}{\overset{m_{k-1}-1}{\sum }}\underset{n=0}{\overset{M_{k-2}-1}%
{\sum }}v\left( n+rM_{k-1}\right) +\underset{r=1}{\overset{m_{k-1}-1}{\sum }}%
\underset{n=M_{k-2}}{\overset{M_{k-1}-1}{\sum }}v\left( n+rM_{k-1}\right)
\end{equation*}%
\begin{equation*}
=\underset{r=1}{\overset{m_{k-1}-1}{\sum }}\underset{n=0}{\overset{M_{k-2}-1}%
{\sum }}\left( v\left( n\right) +2\right) +\underset{r=1}{\overset{m_{k-1}-1}%
{\sum }}\underset{n=M_{k-2}}{\overset{M_{k-1}-1}{\sum }}v\left( n\right)
\end{equation*}%
\begin{equation*}
\left( m_{k-1}-1\right) \left( \underset{n=0}{\overset{M_{k-2}-1}{\sum }}%
\left( v\left( n\right) +2\right) +\underset{n=M_{k-2}}{\overset{M_{k-1}-1}{%
\sum }}v\left( n\right) \right)
\end{equation*}%
\begin{equation*}
=\left( m_{k-1}-1\right) \left( \underset{n=0}{\overset{M_{k-1}-1}{\sum }}%
v\left( n\right) +2M_{k-2}\right) .
\end{equation*}%
This gives that%
\begin{equation*}
\underset{n=0}{\overset{M_{k}-1}{\sum }}v\left( n\right) =\underset{n=0}{%
\overset{M_{k-1}-1}{\sum }}v\left( n\right) +\underset{n=M_{k-1}}{\overset{%
M_{k}-1}{\sum }}v\left( n\right)
\end{equation*}%
\begin{equation*}
=\underset{n=0}{\overset{M_{k-1}-1}{\sum }}v\left( n\right) +\left(
m_{k-1}-1\right) \left( \underset{n=0}{\overset{M_{k-1}-1}{\sum }}v\left(
n\right) +2M_{k-2}\right)
\end{equation*}%
\begin{equation*}
=m_{k-1}\underset{n=0}{\overset{M_{k-1}-1}{\sum }}v\left( n\right) +2M_{k-2}
\end{equation*}

Let
\begin{equation*}
T\left( M_{k}\right) =\sum_{n=0}^{M_{k}-1}v\left( n\right) .
\end{equation*}%
Then
\begin{equation*}
T\left( M_{k}\right) =m_{k-1}T\left( M_{k-1}\right) +2M_{k-2}.
\end{equation*}%
It is easy to see that this is valid for all $k\in \mathbb{N}.$ If we set $\
$%
\begin{equation*}
A\left( k\right) :=T\left( M_{k}\right) /M_{k}
\end{equation*}%
then%
\begin{equation*}
A\left( k\right) \geq A\left( k-1\right) +2/\lambda ^{2},
\end{equation*}%
\begin{equation*}
\text{ \ \ }A\left( k-1\right) \geq A\left( k-2\right) +2/\lambda ^{2},
\end{equation*}%
\begin{equation*}
\cdots \cdots \cdots
\end{equation*}%
\begin{equation*}
A\left( 1\right) \geq A\left( 0\right) +2/\lambda ^{2}.
\end{equation*}

Summing up these inequalities we can write that%
\begin{equation*}
A\left( k\right) \geq 2k/\lambda ^{2}\text{ \ \ }
\end{equation*}%
and%
\begin{equation*}
T\left( M_{k}\right) \geq 2kM_{k}/\lambda ^{2},
\end{equation*}%
which completes the proof of Lemma \ref{lemma2}.
\end{proof}

\section{PROOFS OF MAIN RESULTS}

\begin{proof}[\textbf{Proof} \textbf{of} \textbf{Theorem \protect\ref%
{theorem1}.}]
According to Lemma \ref{lemma1} we only have to prove that%
\begin{equation}
\sum\limits_{n=1}^{\infty }\frac{\left\Vert S_{n,n}a\right\Vert _{p}^{p}}{%
n^{3-2p}\log ^{2\left[ p\right] }\left( n+1\right) }\leq c_{p}<\infty ,
\label{main}
\end{equation}%
for every $p$-atom $a$.

Let $\left( x,y\right) \in \overline{I}_{N}^{2}:=\overline{I}_{N}\times
\overline{I}_{N}$. Let $a$ be an arbitrary $p$-atom with support $%
I_{N}\left( z^{\prime }\right) \times I_{N}\left( z^{\prime \prime }\right) $
and $\mu \left( I_{N}\left( z^{\prime }\right) \right) =\mu \left(
I_{N}\left( z^{\prime \prime }\right) \right) =M_{N}^{-1}$. We can suppose
that $z^{\prime }=z^{\prime \prime }=0.$ In this case $D_{M_{i}}\left(
x-s\right) 1_{I_{N}}\left( s\right) =0$ and $D_{M_{i}}\left( y-t\right)
1_{I_{N}}\left( t\right) =0$ for $i\geq N$. Recall that $\psi _{j}\left(
x-t\right) =\psi _{j}\left( x\right) $ for $t\in I_{N}$ and $j<N$.
Consequently, from (\ref{1dn}) and (\ref{3dn}) we obtain that
\begin{equation*}
S_{n,n}a\left( x,y\right) =\int\limits_{G_{m}^{2}}a\left( s,t\right)
D_{n}\left( x-s\right) D_{n}\left( y-t\right) d\mu \left( s,t\right)
\end{equation*}%
\begin{equation*}
=\int\limits_{I_{N}^{2}}a\left( s,t\right) D_{n}\left( x-s\right)
D_{n}\left( y-t\right) d\mu \left( s,t\right)
\end{equation*}%
\begin{equation*}
=\int\limits_{I_{N}^{2}}a\left( s,t\right) \psi _{n}\left( x-s+y-t\right)
\times
\end{equation*}%
\begin{equation*}
\times \left( \sum_{j=0}^{N-1}D_{M_{j}}\left( x-s\right)
\sum_{u=m_{j}-n_{j}}^{m_{j}-1}r_{j}^{u}\left( x-s\right) \right) \left(
\sum_{j=0}^{N-1}D_{M_{j}}\left( y-t\right)
\sum_{u=m_{j}-n_{j}}^{m_{j}-1}r_{j}^{u}\left( y-t\right) \right) d\mu \left(
s,t\right)
\end{equation*}%
\begin{equation*}
=\psi _{n}\left( x+y\right) \left( \sum_{j=0}^{N-1}D_{M_{j}}\left( x\right)
\sum_{u=m_{j}-n_{j}}^{m_{j}-1}r_{j}^{u}\left( x\right) \right) \left(
\sum_{j=0}^{N-1}D_{M_{j}}\left( y\right)
\sum_{u=m_{j}-n_{j}}^{m_{j}-1}r_{j}^{u}\left( y\right) \right) \times
\end{equation*}%
\begin{equation*}
\times \int\limits_{I_{N}^{2}}a\left( s,t\right) \overline{\psi }_{n}\left(
s+t\right) d\mu \left( s,t\right)
\end{equation*}%
\begin{equation*}
=\psi _{n}\left( x+y\right) \left( \sum_{j=0}^{N-1}D_{M_{j}}\left( x\right)
\sum_{u=m_{j}-n_{j}}^{m_{j}-1}r_{j}^{u}\left( x\right) \right) \left(
\sum_{j=0}^{N-1}D_{M_{j}}\left( y\right)
\sum_{u=m_{j}-n_{j}}^{m_{j}-1}r_{j}^{u}\left( y\right) \right)
\end{equation*}%
\begin{equation*}
\times \int\limits_{I_{N}}\left( \int\limits_{I_{N}}a\left( \tau -t,t\right)
d\mu \left( t\right) \right) \overline{\psi }_{n}\left( \tau \right) d\mu
\left( \tau \right)
\end{equation*}%
\begin{equation*}
=\psi _{n}\left( x+y\right) \left( \sum_{j=0}^{N-1}D_{M_{j}}\left( x\right)
\sum_{u=m_{j}-n_{j}}^{m_{j}-1}r_{j}^{u}\left( x\right) \right) \left(
\sum_{j=0}^{N-1}D_{M_{j}}\left( y\right)
\sum_{u=m_{j}-n_{j}}^{m_{j}-1}r_{j}^{u}\left( y\right) \right) \widehat{\Phi
}\left( n\right) ,
\end{equation*}

where
\begin{equation*}
\Phi \left( \tau \right) =\int\limits_{I_{N}}a\left( \tau -t,t\right) d\mu
\left( t\right) .
\end{equation*}

Let $x\in I_{s}\backslash I_{s+1}.$ According to (\ref{3dn}) we get that
\begin{equation}
\left\vert \sum_{j=0}^{N-1}D_{M_{j}}\left( x\right)
\sum_{u=m_{j}-n_{j}}^{m_{j}-1}r_{j}^{u}\left( x\right) \right\vert \leq
\left\vert \sum_{j=0}^{s}D_{M_{j}}\left( x\right)
\sum_{u=m_{j}-n_{j}}^{m_{j}-1}r_{j}^{u}\left( x\right) \right\vert \leq
cM_{s+1}.  \label{91}
\end{equation}

Combining (\ref{add}) and (\ref{91}) we have that
\begin{equation}
\int_{\overline{I}_{N}}\left\vert \sum_{j=0}^{N-1}D_{M_{j}}\left( x\right)
\sum_{u=m_{j}-n_{j}}^{m_{j}-1}r_{j}^{u}\left( x\right) \right\vert ^{p}d\mu
\left( x\right) \leq c_{p}\underset{s=0}{\overset{N-1}{\sum }}%
\int_{I_{s}\backslash I_{s+1}}M_{s+1}^{p}d\mu \left( x\right)  \label{dir3}
\end{equation}%
\begin{equation}
\leq c_{p}\underset{s=0}{\overset{N-1}{\sum }}M_{s+1}^{p-1}\leq c_{p}N^{%
\left[ p\right] },\text{ \ }0<p\leq 1  \notag
\end{equation}

and
\begin{equation*}
\sum\limits_{n=1}^{\infty }\frac{1}{n^{3-2p}\log ^{2\left[ p\right] }\left(
n+1\right) }\int\limits_{\overline{I}_{N}^{2}}\left\vert S_{n,n}a\left(
x,y\right) \right\vert ^{p}d\mu \left( x,y\right)
\end{equation*}%
\begin{equation*}
\leq \sum\limits_{n=1}^{\infty }\frac{\left\vert \widehat{\Phi }\left(
n\right) \right\vert ^{p}}{n^{3-2p}\log ^{2\left[ p\right] }\left(
n+1\right) }\left( \int_{\overline{I}_{N}}\left\vert
\sum_{j=0}^{N-1}D_{M_{j}}\left( x\right)
\sum_{u=m_{j}-n_{j}}^{m_{j}-1}r_{j}^{u}\left( x\right) \right\vert ^{p}d\mu
\left( x\right) \right) ^{2}
\end{equation*}%
\begin{equation*}
\leq c_{p}N^{2\left[ p\right] }\sum\limits_{n=1}^{\infty }\frac{\left\vert
\widehat{\Phi }\left( n\right) \right\vert ^{p}}{n^{3-2p}\log ^{2\left[ p%
\right] }\left( n+1\right) }.
\end{equation*}

Let $n<M_{N}$. Since $\psi _{n}\left( \tau \right) =1,$ for $\tau \in I_{N}$
we have that
\begin{equation*}
\widehat{\Phi }\left( n\right) =\int\limits_{I_{N}}\Phi \left( \tau \right)
\overline{\psi }_{n}\left( \tau \right) d\mu \left( \tau \right)
=\int\limits_{I_{N}}\left( \int\limits_{I_{N}}a\left( \tau -t,t\right) d\mu
\left( t\right) \right) \overline{\psi }_{n}\left( \tau \right) d\mu \left(
\tau \right)
\end{equation*}%
\begin{equation*}
=\int\limits_{I_{N}^{2}}a\left( s,t\right) d\mu \left( s,t\right) =0.
\end{equation*}%
Hence, we can suppose that $n\geq M_{N}.$ By Hölder inequality we obtain
that
\begin{equation}
N^{2\left[ p\right] }\sum\limits_{n=1}^{\infty }\frac{\left\vert \widehat{%
\Phi }\left( n\right) \right\vert ^{p}}{n^{3-2p}\log ^{2\left[ p\right]
}\left( n+1\right) }\leq \sum\limits_{n=M_{N}}^{\infty }\frac{\left\vert
\widehat{\Phi }\left( n\right) \right\vert ^{p}}{n^{3-2p}}  \label{1p}
\end{equation}%
\begin{equation*}
\leq \left( \sum\limits_{n=M_{N}}^{\infty }\left\vert \widehat{\Phi }\left(
n\right) \right\vert ^{2}\right) ^{p/2}\left( \sum\limits_{n=M_{N}}^{\infty }%
\frac{1}{n^{\left( 3-2p\right) \cdot \left( 2/\left( 2-p\right) \right) }}%
\right) ^{\left( 2-p\right) /2}
\end{equation*}%
\begin{equation*}
\leq \left( \frac{1}{M_{N}^{\left( 2\left( 3-2p\right) /\left( 2-p\right)
-1\right) }}\right) ^{\left( 2-p\right) /2}\left(
\int\limits_{G_{m}}\left\vert \Phi \left( \tau \right) \right\vert ^{2}d\mu
\left( \tau \right) \right) ^{p/2}
\end{equation*}%
\begin{equation*}
\leq \frac{c_{p}}{M_{N}^{\left( 4-3p\right) /2}}\left(
\int\limits_{I_{N}}\left\vert \int\limits_{I_{N}}a\left( \tau -t,t\right)
d\mu \left( t\right) \right\vert ^{2}d\mu \left( \tau \right) \right) ^{p/2}
\end{equation*}%
\begin{equation}
\leq \frac{c_{p}}{M_{N}^{\left( 4-3p\right) /2}}\left\Vert a\right\Vert
_{\infty }^{p}\frac{1}{M_{N}^{p/2}}\frac{1}{M_{N}^{p}}  \notag
\end{equation}%
\begin{equation*}
\leq \frac{c_{p}}{M_{N}^{\left( 4-3p\right) /2}}M_{N}^{2}\frac{1}{%
M_{N}^{3p/2}}=c_{p}<\infty .
\end{equation*}

Let $\left( x,y\right) \in \overline{I}_{N}\times I_{N}$. Then we have that%
\begin{equation*}
S_{n,n}a\left( x,y\right)
\end{equation*}%
\begin{equation*}
=\psi _{n}\left( x\right) \left( \sum_{j=0}^{N-1}D_{M_{j}}\left( x\right)
\sum_{u=m_{j}-n_{j}}^{m_{j}-1}r_{j}^{u}\left( x\right) \right)
\int\limits_{G_{m}^{2}}a\left( s,t\right) \overline{\psi }_{n}\left(
s\right) D_{n}\left( y-t\right) d\mu \left( s,t\right)
\end{equation*}%
\begin{equation*}
=\psi _{n}\left( x\right) \left( \sum_{j=0}^{N-1}D_{M_{j}}\left( x\right)
\sum_{u=m_{j}-n_{j}}^{m_{j}-1}r_{j}^{u}\left( x\right) \right)
\int\limits_{G_{m}}S_{n}^{\left( 2\right) }a\left( s,y\right) \overline{\psi
}_{n}\left( s\right) d\mu \left( s\right)
\end{equation*}%
\begin{equation*}
=\psi _{n}\left( x\right) \left( \sum_{j=0}^{N-1}D_{M_{j}}\left( x\right)
\sum_{u=m_{j}-n_{j}}^{m_{j}-1}r_{j}^{u}\left( x\right) \right) \widehat{S}%
_{n}^{\left( 2\right) }a\left( n,y\right) .
\end{equation*}%
Hence,
\begin{equation*}
\sum\limits_{n=1}^{\infty }\frac{1}{n^{3-2p}\log ^{2\left[ p\right] }\left(
n+1\right) }\int\limits_{\overline{I}_{N}\times I_{N}}\left\vert
S_{n,n}a\left( x,y\right) \right\vert ^{p}d\mu \left( x,y\right)
\end{equation*}%
\begin{equation*}
\leq \sum\limits_{n=1}^{\infty }\frac{1}{n^{3-2p}\log ^{2\left[ p\right]
}\left( n+1\right) }\int\limits_{\overline{I}_{N}\times I_{N}}\left(
\left\vert \sum_{j=0}^{N-1}D_{M_{j}}\left( x\right)
\sum_{u=m_{j}-n_{j}}^{m_{j}-1}r_{j}^{u}\left( x\right) \right\vert
\left\vert \widehat{S}_{n}^{\left( 2\right) }a\left( n,y\right) \right\vert
\right) ^{p}d\mu \left( x,y\right)
\end{equation*}%
\begin{equation*}
\leq \sum\limits_{n=1}^{\infty }\frac{1}{n^{3-2p}\log ^{2\left[ p\right]
}\left( n+1\right) }\int_{\overline{I}_{N}}\left\vert
\sum_{j=0}^{N-1}D_{M_{j}}\left( x\right)
\sum_{u=m_{j}-n_{j}}^{m_{j}-1}r_{j}^{u}\left( x\right) \right\vert ^{p}d\mu
\left( x\right) \cdot \int\limits_{I_{N}}\left\vert \widehat{S}_{n}^{\left(
2\right) }a\left( n,y\right) \right\vert ^{p}d\mu \left( y\right)
\end{equation*}%
\begin{equation*}
\leq c_{p}N^{\left[ p\right] }\sum\limits_{n=1}^{\infty }\frac{1}{%
n^{3-2p}\log ^{2\left[ p\right] }\left( n+1\right) }\int\limits_{I_{N}}\left%
\vert \widehat{S}_{n}^{\left( 2\right) }a\left( n,y\right) \right\vert
^{p}d\mu \left( y\right) .
\end{equation*}

Let $n<M_{N}$. Then, by the definition of atoms we have that
\begin{equation*}
\widehat{S}_{n}^{\left( 2\right) }a\left( n,y\right)
=\int\limits_{G_{m}}\left( \int\limits_{G_{m}}a\left( s,t\right) D_{n}\left(
y-t\right) d\mu \left( t\right) \right) \overline{\psi }_{n}\left( s\right)
d\mu \left( s\right)
\end{equation*}%
\begin{equation*}
=D_{n}\left( y\right) \int\limits_{I_{N}^{2}}a\left( s,t\right) d\mu \left(
s,t\right) =0.
\end{equation*}%
Therefore, we can suppose that $n\geq M_{N}$. Hence
\begin{equation*}
\sum\limits_{n=1}^{\infty }\frac{1}{n^{3-2p}\log ^{2\left[ p\right] }\left(
n+1\right) }\int\limits_{\overline{I}_{N}\times I_{N}}\left\vert
S_{n,n}a\left( x,y\right) \right\vert ^{p}d\mu \left( x,y\right)
\end{equation*}%
\begin{equation*}
\leq c_{p}N^{\left[ p\right] }\sum\limits_{n=M_{N}}^{\infty }\frac{1}{%
n^{3-2p}\log ^{2\left[ p\right] }\left( n+1\right) }\int\limits_{I_{N}}\left%
\vert \widehat{S}_{n}^{\left( 2\right) }a\left( n,y\right) \right\vert
^{p}d\mu \left( y\right) .
\end{equation*}%
Since
\begin{equation*}
\left\Vert \widehat{S}_{n}^{\left( 2\right) }a\left( n,y\right) \right\Vert
_{2}\leq c\left\Vert a\right\Vert _{2},
\end{equation*}%
from Hölder inequality we can write that%
\begin{equation*}
\int\limits_{I_{N}}\left\vert \widehat{S}_{n}^{\left( 2\right) }a\left(
n,y\right) \right\vert ^{p}d\mu \left( y\right) \leq \frac{c_{p}}{M_{N}^{1-p}%
}\left( \int\limits_{I_{N}}\left\vert \widehat{S}_{n}^{\left( 2\right)
}a\left( n,y\right) \right\vert d\mu \left( y\right) \right) ^{p}
\end{equation*}%
\begin{equation*}
=\frac{c_{p}}{M_{N}^{1-p}}\left( \int\limits_{I_{N}}\left\vert
\int\limits_{I_{N}}S_{n}^{\left( 2\right) }a\left( s,y\right) \overline{\psi
}_{n}\left( s\right) d\mu \left( s\right) \right\vert d\mu \left( y\right)
\right) ^{p}
\end{equation*}%
\begin{equation*}
=\frac{c_{p}}{M_{N}^{1-p}}\left( \int\limits_{I_{N}}\left\vert
\int\limits_{I_{N}}\left( \int\limits_{I_{N}}a\left( s,t\right) D_{n}\left(
y-t\right) d\mu \left( t\right) \right) \overline{\psi }_{n}\left( s\right)
d\mu \left( s\right) \right\vert d\mu \left( y\right) \right) ^{p}
\end{equation*}%
\begin{equation*}
\leq \frac{c_{p}}{M_{N}^{1-p}}\left( \int\limits_{I_{N}}\left(
\int\limits_{I_{N}}\left\vert \int\limits_{I_{N}}a\left( s,t\right)
D_{n}\left( y-t\right) d\mu \left( t\right) \right\vert d\mu \left( y\right)
\right) d\mu \left( s\right) \right) ^{p}
\end{equation*}%
\begin{equation*}
\leq \frac{c_{p}}{M_{N}^{1-p}}\left( \frac{1}{M_{N}^{1/2}}%
\int\limits_{I_{N}}\left( \int\limits_{I_{N}}\left\vert
\int\limits_{I_{N}}a\left( s,t\right) D_{n}\left( y-t\right) d\mu \left(
t\right) \right\vert ^{2}d\mu \left( y\right) \right) ^{1/2}d\mu \left(
s\right) \right) ^{p}
\end{equation*}%
\begin{equation*}
\leq \frac{c_{p}}{M_{N}^{1-p}}\left( \frac{1}{M_{N}^{1/2}}%
\int\limits_{I_{N}}\left( \int\limits_{I_{N}}\left\vert a\left( s,t\right)
\right\vert ^{2}d\mu \left( t\right) \right) ^{1/2}d\mu \left( s\right)
\right) ^{p}
\end{equation*}%
\begin{equation*}
\leq \frac{c_{p}}{M_{N}^{1-p}}\left( \frac{\left\Vert a\right\Vert _{\infty }%
}{M_{N}^{1/2}}\frac{1}{M_{N}}\frac{1}{M_{N}^{1/2}}\right) ^{p}\leq \frac{%
c_{p}}{M_{N}^{1-p}}\left( \frac{M_{N}^{2/p}}{M_{N}^{2}}\right) ^{p}\leq
c_{p}M_{N}^{1-p}.
\end{equation*}%
Consequently,
\begin{equation}
\sum\limits_{n=1}^{\infty }\frac{1}{n^{3-2p}\log ^{2\left[ p\right] }\left(
n+1\right) }\int\limits_{\overline{I}_{N}\times I_{N}}\left\vert
S_{n,n}a\left( x,y\right) \right\vert d\mu \left( x,y\right)  \label{2p}
\end{equation}%
\begin{equation*}
\leq c_{p}N^{\left[ p\right] }\sum\limits_{n=M_{N}}^{\infty }\frac{1}{%
n^{3-2p}\log ^{2\left[ p\right] }\left( n+1\right) }M_{N}^{1-p}\leq \left\{
\begin{array}{c}
c_{p}/M_{N}^{1-p},\text{ \ }0<p<1 \\
c,\text{ \ \ \ \ \ }p=1%
\end{array}%
\right. \leq c_{p}<\infty .
\end{equation*}

Analogously, we can prove that
\begin{equation}
\sum\limits_{n=1}^{\infty }\frac{1}{n^{3-2p}\log ^{2\left[ p\right] }\left(
n+1\right) }\int\limits_{I_{N}\times \overline{I}_{N}}\left\vert
S_{n,n}a\left( x,y\right) \right\vert ^{p}d\mu \left( x,y\right) \leq
c_{p}<\infty .  \label{3p}
\end{equation}

Let $\left( x,y\right) \in I_{N}^{2}:=I_{N}\times I_{N}$. Then by the
definition of atoms we can write that%
\begin{equation*}
\int\limits_{I_{N}^{2}}\left\vert S_{n,n}a\left( x,y\right) \right\vert
^{p}d\mu \left( x,y\right)
\end{equation*}%
\begin{equation*}
\leq \frac{1}{M_{N}^{2-p}}\left( \int\limits_{I_{N}^{2}}\left\vert
S_{n,n}a\left( x,y\right) \right\vert ^{2}d\mu \left( x,y\right) \right)
^{p/2}
\end{equation*}%
\begin{equation*}
\leq \frac{1}{M_{N}^{2-p}}\left( \int\limits_{I_{N}^{2}}\left\vert a\left(
x,y\right) \right\vert ^{2}d\mu \left( x,y\right) \right) ^{p/2}
\end{equation*}%
\begin{equation*}
\leq \frac{\left\Vert a\right\Vert _{\infty }^{p}}{M_{N}^{2-p}}\frac{1}{%
M_{N}^{p}}\leq c_{p}\frac{1}{M_{N}^{2-p}}M_{N}^{2}\frac{1}{M_{N}^{p}}\leq
c_{p}<\infty .
\end{equation*}

It follows that
\begin{equation}
\sum\limits_{n=1}^{\infty }\frac{1}{n^{3-2p}\log ^{2\left[ p\right] }\left(
n+1\right) }\int\limits_{I_{N}^{2}}\left\vert S_{n,n}a\left( x,y\right)
\right\vert ^{p}d\mu \left( x,y\right)  \label{4p}
\end{equation}%
\begin{equation*}
\leq c_{p}\sum\limits_{n=1}^{\infty }\frac{1}{n^{3-2p}\log ^{2\left[ p\right]
}\left( n+1\right) }\leq c_{p}<\infty .
\end{equation*}

By combining (\ref{main}-\ref{4p}) we complete the proof of Theorem \ref%
{theorem1}.
\end{proof}

\bigskip

\begin{proof}[\textbf{Proof of Corollary \protect\ref{corollary1}.}]
Suppose that\textbf{\ }
\begin{equation}
\sum\limits_{n=1}^{\infty }\frac{\left\Vert S_{n,n}f\right\Vert _{p}^{p}}{%
n^{3-2p}\log ^{2\left[ p\right] }\left( n+1\right) }\leq c_{p}\left\Vert
f\right\Vert _{H_{p}^{\square }}^{p}.  \label{main1}
\end{equation}

Since%
\begin{equation*}
\left\Vert \sup_{k}\left\vert S_{M_{k},M_{k}}f\right\vert \right\Vert
_{p}\leq c_{p}\left\Vert f\right\Vert _{H_{p}^{\square }}
\end{equation*}%
and
\begin{equation*}
S_{M_{k},M_{k}}S_{n,n}f=\left\{
\begin{array}{c}
S_{M_{k},M_{k}}f,\text{ \ }n>M_{k} \\
S_{n,n}f,\text{ \ \ \ }n\leq M_{k}%
\end{array}%
\right.
\end{equation*}%
we obtain that%
\begin{equation*}
\left\Vert S_{n,n}f\right\Vert _{H_{p}^{\square }}\leq \left\Vert
\sup_{k}\left\vert S_{M_{k},M_{k}}f\right\vert \right\Vert _{p}+\left\Vert
S_{n,n}f\right\Vert _{p}.
\end{equation*}

and%
\begin{equation*}
\sum\limits_{n=1}^{\infty }\frac{\left\Vert S_{n,n}f\right\Vert
_{H_{p}^{\square }}^{p}}{n^{3-2p}\log ^{2\left[ p\right] }\left( n+1\right) }
\end{equation*}%
\begin{equation*}
\leq \sum\limits_{n=1}^{\infty }\frac{\left\Vert S_{n,n}f\right\Vert _{p}^{p}%
}{n^{3-2p}\log ^{2\left[ p\right] }\left( n+1\right) }+\left\Vert
\sup_{k}\left\vert S_{M_{k},M_{k}}f\right\vert \right\Vert
_{p}^{p}\sum\limits_{n=1}^{\infty }\frac{1}{n^{3-2p}\log ^{2\left[ p\right]
}\left( n+1\right) }\leq c_{p}\left\Vert f\right\Vert _{H_{p}^{\square
}}^{p}.
\end{equation*}

This complites the proof of Corollary \ref{corollary1}.
\end{proof}

\begin{proof}[\textbf{Proof} \textbf{of} \textbf{Theorem \protect\ref%
{theorem2}.}]
\textbf{\ }Let $0<p\leq 1$ and $\Phi \left( n\right) $ be any nondecreasing,
nonnegative function, satisfying condition
\begin{equation*}
\underset{n\rightarrow \infty }{\lim }\Phi \left( n\right) =\infty .
\end{equation*}%
For this function $\Phi \left( n\right) ,$ there exists an increasing
sequence of the positive integers $\left\{ \alpha _{k}:\text{ }k\in
\mathbb{N}
\right\} $ such that $\alpha _{0}\geq 2$ and%
\begin{equation}
\sum_{k=0}^{\infty }\frac{\lambda ^{2}}{\Phi ^{p/4}\left( M_{\alpha
_{k}}\right) }<\infty ,  \label{2}
\end{equation}%
where $\lambda =\sup_{n}m_{n}.$

Let \qquad
\begin{equation*}
f_{n,n}\left( x,y\right) =\sum_{\left\{ k;\text{ }\alpha _{k}<n\right\}
}\lambda _{k}a_{k},
\end{equation*}%
where
\begin{equation*}
\lambda _{k}=\frac{\lambda ^{2}}{\Phi ^{1/4}\left( M_{\alpha _{k}}\right) }
\end{equation*}%
and%
\begin{equation*}
a_{k}\left( x,y\right) =\frac{M_{\alpha _{k}}^{2/p-2}}{\lambda ^{2}}\left(
D_{M_{\alpha _{k}+1}}\left( x\right) -D_{M_{\alpha _{k}}}\left( x\right)
\right) \left( D_{M_{\alpha _{k}+1}}\left( y\right) -D_{M_{\alpha
_{k}}}\left( y\right) \right) .
\end{equation*}

It is easy to show that the martingale $\,f=\left( f_{1,1},\text{ }%
f_{2,2},...,\text{ }f_{A,A},\text{ }...\right) \in H_{p}^{\square },$ where $%
0<p\leq 1.$

Indeed, since%
\begin{equation}
S_{M_{A},M_{A}}a_{k}\left( x,y\right) =\left\{
\begin{array}{l}
a_{k}\left( x,y\right) \text{, \ \ }\alpha _{k}<A, \\
0\text{, \qquad\ \ \ \ \ \ }\alpha _{k}\geq A,%
\end{array}%
\right.  \label{4}
\end{equation}%
\begin{equation*}
\text{supp}(a_{k})=I_{\alpha _{k}}^{2},\text{ \ \ \ }\int_{I_{\alpha
_{k}}^{2}}a_{k}d\mu =0
\end{equation*}%
and%
\begin{equation*}
\left\Vert a_{k}\right\Vert _{\infty }\leq \frac{M_{\alpha
_{k}}^{2/p-2}M_{\alpha _{k}+1}^{2}}{\lambda ^{2}}\leq M_{\alpha
_{k}}^{2/p}=\mu (\text{supp }a_{k})^{-1/p}
\end{equation*}%
from Lemma \ref{lemma1} and (\ref{2}) we conclude that $f\in H_{p}^{\square
}\left( G_{m}^{2}\right) ,$ where $0<p\leq 1.$

It is easy to show that%
\begin{equation}
\widehat{f}(i,j)=  \label{5}
\end{equation}%
\begin{equation*}
\left\{
\begin{array}{l}
\frac{M_{\alpha _{k}}^{2/p-2}}{\Phi ^{1/4}\left( M_{\alpha _{k}}\right) }%
,\,\, \\
\text{ if }\left( i,\text{\thinspace }j\right) \in \left\{ M_{\alpha
_{k}},...,\text{ ~}M_{\alpha _{k}+1}-1\right\} \times \left\{ M_{\alpha
_{k}},...,\text{ ~}M_{\alpha _{k}+1}-1\right\} ,\text{ }k=0,1,2... \\
0,\text{ \thinspace } \\
\text{\thinspace \thinspace if \thinspace }\left( \,i,\text{\thinspace }%
j\right) \notin \bigcup\limits_{k=1}^{\infty }\left\{ M_{\alpha _{k}},...,%
\text{ ~}M_{\alpha _{k}+1}-1\right\} \text{ }\times \left\{ M_{\alpha
_{k}},...,\text{ ~}M_{\alpha _{k}+1}-1\right\} .%
\end{array}%
\right.
\end{equation*}

Let\textbf{\ } $M_{\alpha _{k}}<n<M_{\alpha _{k}+1}$. From (\ref{5}) we have
that%
\begin{equation}
S_{n,n}f\left( x,y\right)  \label{13d}
\end{equation}%
\begin{equation*}
=\sum_{i=0}^{M_{\alpha _{k-1}+1}-1}\sum_{j=0}^{M_{\alpha _{k-1}+1}-1-1}%
\widehat{f}(i,j)\psi _{i}\left( x\right) \psi _{j}\left( y\right)
+\sum_{i=M_{\alpha _{k}}}^{n-1}\sum_{j=M_{\alpha _{k}}}^{n-1}\widehat{f}%
(i,j)\psi _{i}\left( x\right) \psi _{j}\left( y\right)
\end{equation*}%
\begin{equation*}
=\sum_{\eta =0}^{k-1}\sum_{i=M_{\alpha _{\eta }}}^{M_{\alpha _{\eta
}+1}-1}\sum_{j=M_{\alpha _{\eta }}}^{M_{\alpha _{\eta }+1}-1}\widehat{f}%
(i,j)\psi _{i}\left( x\right) \psi _{j}\left( y\right) +\sum_{i=M_{\alpha
_{k}}}^{n-1}\sum_{j=M_{\alpha _{k}}}^{n-1}\widehat{f}(i,j)\psi _{i}\left(
x\right) \psi _{j}\left( y\right)
\end{equation*}%
\begin{equation*}
=\sum_{\eta =0}^{k-1}\sum_{i=M_{\alpha _{\eta }}}^{M_{\alpha _{\eta
}+1}-1}\sum_{j=M_{\alpha _{\eta }}}^{M_{\alpha _{\eta }+1}-1}\frac{M_{\alpha
_{\eta }}^{2/p-2}}{\Phi ^{1/4}\left( M_{\alpha _{\eta }}\right) }\psi
_{i}\left( x\right) \psi _{j}\left( y\right) +\sum_{i=M_{\alpha
_{k}}}^{n-1}\sum_{j=M_{\alpha _{k}}}^{n-1}\frac{M_{\alpha _{k}}^{2/p-2}}{%
\Phi ^{1/4}\left( M_{\alpha _{k}}\right) }\psi _{i}\left( x\right) \psi
_{j}\left( y\right)
\end{equation*}%
\begin{equation*}
=\sum_{\eta =0}^{k-1}\frac{M_{\alpha _{\eta }}^{2/p-2}}{\Phi ^{1/4}\left(
M_{\alpha _{\eta }}\right) }\left( D_{M_{\alpha _{\eta }+1}}\left( x\right)
-D_{M_{\alpha _{\eta }}}\left( x\right) \right) \left( D_{M_{\alpha _{\eta
}+1}}\left( y\right) -D_{M_{\alpha _{\eta }}}\left( y\right) \right)
\end{equation*}%
\begin{equation*}
+\frac{M_{\alpha _{k}}^{2/p-2}}{\Phi ^{1/4}\left( M_{\alpha _{k}}\right) }%
\left( D_{n}\left( x\right) -D_{M_{\alpha _{k}}}\left( x\right) \right)
\left( D_{n}\left( y\right) -D_{M_{\alpha _{k}}}\left( y\right) \right)
=I+II.
\end{equation*}

Let $\left( x,y\right) \in \left( G_{m}\backslash I_{1}\right) \times \left(
G_{m}\backslash I_{1}\right) $ and $n\in \mathbb{N}_{n_{0}}$. Since $%
M_{\alpha _{k}}<n<M_{\alpha _{k}+1},$ where $\alpha _{k}\geq 2$ $\left( k\in
\mathbb{N}\right) $, from (\ref{1dn}), (\ref{2dn}) and (\ref{3dn}) we can
write that%
\begin{equation}
D_{M_{s}}\left( z\right) =0,\text{ for }z\in \left( G\backslash I_{1}\right)
\text{ and }s\geq 2  \label{13a0}
\end{equation}%
and%
\begin{equation}
\left\vert II\right\vert =\frac{M_{\alpha _{k}}^{2/p-2}}{\Phi ^{1/4}\left(
M_{\alpha _{k}}\right) }\left\vert D_{n}\left( x\right) D_{n}\left( y\right)
\right\vert  \label{13a}
\end{equation}%
\begin{equation*}
=\frac{M_{\alpha _{k}}^{2/p-2}}{\Phi ^{1/4}\left( M_{\alpha _{k}}\right) }%
\left\vert \psi _{M_{\alpha _{k}}}\left( x\right) \psi _{n-M_{\alpha
_{k}}}\left( x\right) D_{_{1}}\left( x\right) \psi _{M_{\alpha _{k}}}\left(
y\right) \psi _{n-M_{\alpha _{k}}}\left( y\right) D_{_{1}}\left( y\right)
\right\vert =\frac{M_{\alpha _{k}}^{2/p-2}}{\Phi ^{1/4}\left( M_{\alpha
_{k}}\right) }.
\end{equation*}%
By applying (\ref{13a0}) and condition $\alpha _{n}\geq 2$ $\left( n\in
\mathbb{N}\right) $ for $I$ we have that%
\begin{equation}
I=\sum_{\eta =0}^{k-1}\frac{M_{\alpha _{\eta }}^{2/p-2}}{\Phi ^{1/4}\left(
M_{\alpha _{\eta }}\right) }\left( D_{M_{\alpha _{\eta }+1}}\left( x\right)
-D_{M_{\alpha _{\eta }}}\left( x\right) \right) \left( D_{M_{\alpha _{\eta
}+1}}\left( y\right) -D_{M_{\alpha _{\eta }}}\left( y\right) \right) =0.
\label{13b}
\end{equation}%
Hence%
\begin{equation}
\left\Vert S_{n,n}f\left( x,y\right) \right\Vert _{weak-L_{p}}  \label{13}
\end{equation}%
\begin{equation}
\geq \frac{M_{\alpha _{k}}^{2/p-2}}{2\Phi ^{1/4}\left( M_{\alpha
_{k}}\right) }\left( \mu \left\{ \left( x,y\right) \in \left(
G_{m}\backslash I_{1}\right) \times \left( G_{m}\backslash I_{1}\right)
:\left\vert S_{n,n}f\left( x,y\right) \right\vert \geq \frac{M_{\alpha
_{k}}^{2/p-2}}{2\Phi ^{1/4}\left( M_{\alpha _{k}}\right) }\right\} \right)
^{1/p}  \notag
\end{equation}%
\begin{equation*}
\geq \frac{M_{\alpha _{k}}^{2/p-2}}{2\Phi ^{1/4}\left( M_{\alpha
_{k}}\right) }\left\vert \left( G_{m}\backslash I_{1}\right) \times \left(
G_{m}\backslash I_{1}\right) \right\vert ^{1/p}\geq \frac{c_{p}M_{\alpha
_{k}}^{2/p-2}}{\Phi ^{1/4}\left( M_{\alpha _{k}}\right) }.
\end{equation*}

By combining (\ref{1a}) and (\ref{13}) we have that%
\begin{equation}
\underset{n=1}{\overset{M_{\alpha _{k}+1}-1}{\sum }}\frac{\left\Vert
S_{n,n}f\right\Vert _{weak-L_{p}}^{p}\Phi \left( n\right) }{n^{3-2p}}
\label{14}
\end{equation}%
\begin{equation*}
\geq \underset{n=M_{\alpha _{k}}+1}{\overset{M_{\alpha _{k}+1}-1}{\sum }}%
\frac{\left\Vert S_{n,n}f\right\Vert _{weak-L_{p}}^{p}\Phi \left( n\right) }{%
n^{3-2p}}
\end{equation*}%
\begin{equation*}
\geq c_{p}\Phi \left( M_{\alpha _{k}}\right) \underset{\left\{ n:M_{\alpha
_{k}}\leq n\leq M_{\alpha _{k}+1},\text{ }n\in \mathbb{N}_{n_{0}}\right\} }{%
\sum }\frac{\left\Vert S_{n,n}f\right\Vert _{weak-L_{p}}^{p}}{n^{3-2p}}
\end{equation*}%
\begin{equation}
\geq c_{p}\Phi \left( M_{\alpha _{k}}\right) \frac{M_{\alpha _{k}}^{2-2p}}{%
\Phi ^{1/4}\left( M_{\alpha _{k}}\right) M_{\alpha _{k}+1}^{3-2p}}\underset{%
\left\{ n:M_{\alpha _{k}}\leq n\leq M_{\alpha _{k}+1},\text{ }n\in \mathbb{N}%
_{n_{0}}\right\} }{\sum }1  \notag
\end{equation}%
\begin{equation*}
\geq c_{p}\Phi ^{3/4}\left( M_{\alpha _{k}}\right) \rightarrow \infty ,\text{
\qquad when }k\rightarrow \infty .
\end{equation*}

This completes the proof of Theorem \ref{theorem2} when $0<p<1$.

Now, we prove part b) of Theorem \ref{theorem2}. Let\textbf{\ } $M_{\alpha
_{k}}<n<M_{\alpha _{k}+1}$. Since%
\begin{equation}
D_{l+M_{\alpha _{k}}}=D_{M_{\alpha _{k}}}+\psi _{M_{\alpha _{k}}}D_{l},\text{
when }\,\,l<M_{\alpha _{k}},  \label{13e}
\end{equation}

if we apply (\ref{5}) and (\ref{13d}) we obtain that%
\begin{equation}
S_{n,n}f\left( x,y\right)  \label{93}
\end{equation}%
\begin{equation*}
=\frac{\psi _{M_{\alpha _{k}}}\left( x\right) \psi _{M_{\alpha _{k}}}\left(
y\right) D_{n-M_{\alpha _{k}}}\left( x\right) D_{n-M_{\alpha _{k}}}\left(
y\right) }{\Phi ^{1/4}\left( M_{\alpha _{k}}\right) }
\end{equation*}%
\begin{equation*}
+\sum_{\eta =0}^{k-1}\frac{\left( D_{M_{\alpha _{\eta }+1}}\left( x\right)
-D_{M_{\alpha _{\eta }}}\left( x\right) \right) \left( D_{M_{\alpha _{\eta
}+1}}\left( y\right) -D_{M_{\alpha _{\eta }}}\left( y\right) \right) }{\Phi
^{1/4}\left( M_{\alpha _{\eta }}\right) }.
\end{equation*}

Since (see Lukyanenko \cite{luk})%
\begin{equation*}
\frac{1}{4\lambda }v\left( n\right) +\frac{1}{\lambda }v^{\ast }\left(
n\right) +\frac{1}{2\lambda }\leq L_{n}\leq \frac{3}{2}v\left( n\right)
+4v^{\ast }\left( n\right) -1,
\end{equation*}

where $v\left( n\right) $ and $v^{\ast }\left( n\right) $ are defined by (%
\ref{100}) and $\lambda :=\sup_{n\in \mathbb{N}}m_{n},$ according to (\ref%
{1dn}) and (\ref{93}) we have that%
\begin{equation}
\left\Vert S_{n,n}f\left( x,y\right) \right\Vert _{1}\geq \frac{1}{\Phi
^{1/4}\left( M_{\alpha _{k}}\right) }\left\Vert D_{n-M_{\alpha _{k}}}\left(
x\right) D_{n-M_{\alpha _{k}}}\left( y\right) \right\Vert _{1}  \label{6}
\end{equation}%
\begin{equation*}
-\sum_{\eta =0}^{k-1}\frac{\left\Vert \left( D_{M_{\alpha _{\eta }+1}}\left(
x\right) -D_{M_{\alpha _{\eta }}}\left( x\right) \right) \left( D_{M_{\alpha
_{\eta }+1}}\left( y\right) -D_{M_{\alpha _{\eta }}}\left( y\right) \right)
\right\Vert _{1}}{\Phi ^{1/4}\left( M_{\alpha _{\eta }}\right) }
\end{equation*}%
\begin{equation*}
\geq \frac{cL^{2}\left( n-M_{\alpha _{k}}\right) }{\Phi ^{1/4}\left(
M_{\alpha _{k}}\right) }-\sum_{\eta =0}^{\infty }\frac{1}{\Phi ^{1/4}\left(
M_{\alpha _{\eta }}\right) }\geq \frac{cv^{2}\left( n-M_{\alpha _{k}}\right)
}{\Phi ^{1/4}\left( M_{\alpha _{k}}\right) }-C.
\end{equation*}

By combining Lemma \ref{lemma2}, (\ref{6}) and Cauchy-Schwarz inequality we
obtain that%
\begin{equation*}
\underset{l=1}{\overset{M_{\alpha _{k}+1}}{\sum }}\frac{\left\Vert
S_{l,l}f\right\Vert _{1}\Phi \left( l\right) }{l\log ^{2}\left( l+1\right) }%
\geq \underset{l=M_{\alpha _{k}}+1}{\overset{M_{\alpha _{k}+1}}{\sum }}\frac{%
\left\Vert S_{l,l}f_{n,n}\right\Vert _{1}\Phi \left( l\right) }{l\log
^{2}\left( l+1\right) }
\end{equation*}%
\begin{equation*}
\geq \frac{c\Phi \left( M_{\alpha _{k}}\right) }{\alpha _{k}^{2}M_{\alpha
_{k}+1}\Phi ^{1/4}\left( M_{\alpha _{k}}\right) }\underset{l=M_{\alpha
_{k}}+1}{\overset{M_{\alpha _{k}+1}}{\sum }}v^{2}\left( l-M_{\alpha
_{k}}\right) \geq \frac{c\Phi \left( M_{\alpha _{k}}\right) }{\alpha
_{k}^{2}M_{\alpha _{k}+1}\Phi ^{1/4}\left( M_{\alpha _{k}}\right) }\underset{%
l=1}{\overset{M_{\alpha _{k}}}{\sum }}v^{2}\left( l\right)
\end{equation*}%
\begin{equation*}
\geq c\Phi ^{3/4}\left( M_{\alpha _{k}}\right) \left( \frac{1}{\alpha
_{k}M_{\alpha _{k}}}\underset{l=1}{\overset{M_{\alpha _{k}}}{\sum }}v\left(
l\right) \right) ^{2}\geq c\Phi ^{3/4}\left( M_{\alpha _{k}}\right)
\rightarrow \infty ,\text{ when \ }k\rightarrow \infty ,
\end{equation*}

which complete the proof of Theorem \ref{theorem2}.
\end{proof}

\end{document}